\def\ccn{^{\hspace{0.02cm}\circ\hspace{.04cm}\circ}}
\def\nc{^{\hspace{0.21cm}\circ}}
\DeclareMathOperator{\pnt}{\raise 0.5mm \hbox{\large\bf.}}
\def\+#1{\relax\ifmmode\if\noexpand #1\relax \mathop{\kern
   0pt^+{#1}}\nolimits\else \kern 0pt^+\!#1 \fi\else$^*$#1\fi}
\definecolor{vio}{rgb}{0.8,0.2,0.8}
\newtheorem{thm}{\bf Theorem}[section]
\newtheorem{prop}[thm]{\bf Proposition}
\newtheorem{property}[thm]{\bf Property}
\theoremstyle{definition}
\newtheorem{rem}[thm]{\bf Remark}
\theoremstyle{plain}
\newtheorem*{thm*}{Theorem}
\newtheorem*{quest*}{Question}
\begin{document}
\today
\title{Column tessellations}
\author{Ngoc Linh Nguyen, Viola Weiss and Richard Cowan}
\address{Friedrich-Schiller-Universit\"at Jena, Institut füt Stochastik, Germany}
\address{Ernst-Abbe-Fachhochschule Jena, Fachbereich Grundlagenwissenschaften, Germany}
\address{School of Mathematics and Statistics, University of Sydney, Australia}
\email{linh.nguyen@uni-jena.de}
\email{viola.weiss@fh-jena.de}
\email{richard.cowan@sydney.edu.au}

\thanks{The first and second author have been supported by Deutsche Forschungsgemeinschaft.}

\begin{abstract}
A new class of random spatial tessellations is introduced -- the so-called column tessellations of three-dimensional space. The construction is based on a stationary planar tessellation.  Each cell of the spatial tessellation is a prism whose base facet is  congruent to a cell
of the planar tessellation. Thus intensities, topological and metric mean values of the spatial tessellation can be calculated by suitably chosen parameters of the planar tessellation. A column tessellation is not facet-to-facet.
\end{abstract}

\maketitle
\bigskip

\section{Introduction}
\label{intro}

Random tessellations are one of the classical structures considered in stochastic geometry. Two standard models are the Poisson hyperplane and Poisson Voronoi tessellations, see \cite{SW}, \cite{SKM}. In the plane these tessellations are side-to-side. That means each side of a polygonal tessellation cell coincides with a side of a neighbouring cell. In higher dimensions they are  facet-to-facet. In recent years there has been a growing interest in tessellation models that do not fulfill this property. A first systematic study of the effects when a tessellation is not facet-to-facet is given in \cite{WC} for the planar and spatial case, with a further planar study presented in \cite{CT}. Tessellations of that kind arise for example by subsequent cell division. Among these models the \emph{iteration stable} or STIT tessellations are of particular interest, because of the number of analytically available results, see \cite{NW05}, \cite{MNW08}, \cite{CO}, \cite{TWN}, \cite{TW} and the references therein. They may serve as a reference model for crack and fissure structures or for processes of cell division. The development of new model classes is important for further applications to random structures in materials science, geology and biology  --- and the current paper contributes to that aim.

In this paper we consider a new class of spatial tessellations, whose construction is based on a stationary planar tessellation ${\mathcal Y}'$. From  each cell  $z$ of ${\mathcal Y}'$ we form an infinite column perpendicular to the plane $\mathcal{E}$ in which ${\mathcal Y}'$ lies and having that planar cell $z$ as cross-section. To create a spatial tessellation, each infinite column is intersected by planar plates which are congruent to $z$ and parallel to $\mathcal{E}$. Thus the spatial cells which arise are prisms and their base facets are translations (in the third dimension orthogonal to $\mathcal{E}$) of the cells of ${\mathcal Y}'$. The resulting three-dimensional tessellation ${\mathcal Y}$ is called a \emph{column tessellation}.  The intersecting plates of a column are positioned so that no plate is coplanar with a plate of a neighbouring column. Hence cells in neighbouring columns do not have a common facet. Therefore the tessellation ${\mathcal Y}$ is not facet-to-facet. The definitions of how the plates intersect the columns can vary, thus giving scope to consider different cases -- and so to construct a rich model class. The column tessellations we study are a generalization of less general column constructions considered in \cite{WC} and a modification of stratum mosaics introduced by Mecke \cite{M84}. Column tessellations could be useful to describe crack structures in geology, as for example in the Giant's Causeway of Northern Ireland (see Figure \ref{gc}).

 \begin{figure}[h]
    \begin{center}
        \includegraphics[width=80mm]{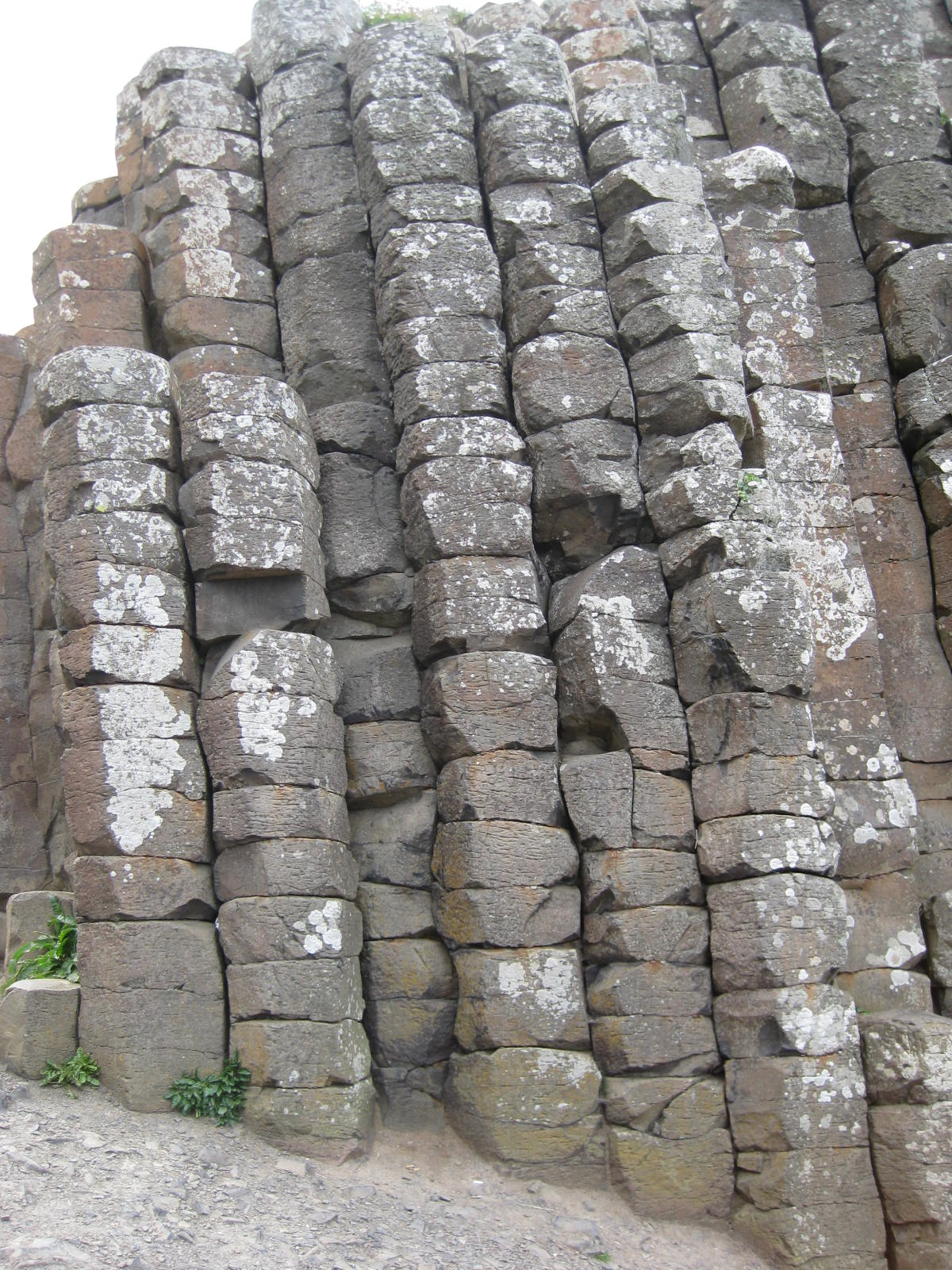}\\
        \caption{ \label{gc} Basalt columns, approximately $6-8$ metres high, divided by `plates' at approximately $30$ cm spacing. Photo taken by one of the authors. There are many formations like this one (at the Giant's Causeway, Northern Ireland) around the world. }
    \end{center}
\end{figure}

In this paper we will explore the question of which parameters of the planar tessellation are necessary to calculate characteristics of the spatial tessellation. This is interesting, for example, when only a planar section through a spatial column tessellation can be observed.

The paper is organized as follows. To describe in detail the topological effects, for tessellations which are not facet-to-facet, we use the system of notations given in \cite{WC}. Section $\ref{notations}$ gives a short introduction to basic notations of planar and spatial tessellations.  In Section $\ref{generalmodel}$ the general construction of a column tessellation is explained, special notations are defined and basic properties are considered. For reasons of comprehensibility throughout the paper we often consider the special case, where the intersecting plates in a column have constant separation $1$. This generates a column tessellation where all the cells have height $1$. We illustrate notations and results for this special case. In Section $\ref{relations}$ it is shown that intensities and topological mean values of a spatial column tessellation can be determined by suitably chosen parameters of the planar tessellation. Later in this section relations for metric mean values of the column tessellation are also deduced.

\section{Basic notations}
\label{notations}
In this paper we study stationary random tessellations in $\mathbb{R}^2$ and $\mathbb{R}^3$ having only convex cells. We use the system of notation given in \cite{WC}.
For a spatial tessellation (that is, of $\mathbb{R}^3$), we deal with four kinds of {\it primitive elements}: vertices, edges, plates and cells. The corresponding classes are denoted by $\mathsf{V}$, $\mathsf{E}$, $\mathsf{P}$ and $\mathsf{Z}$. The primitive elements are $k$-dimensional convex polytopes, $k=0,1,2,3$, which cannot have any other elements in their relative interior. An object belonging to a class $\mathsf{X}$ is often referred to as ``an $\mathsf{X}$-type object" or ``an object of type $\mathsf{X}$".

The {\it intensity} of objects of class $\mathsf{X}$ is denoted by $\lambda_\mathsf{X}$. It is the mean number of centroids of $\mathsf{X}$-type objects per unit volume. It is assumed henceforth that $0 < \lambda_\mathsf{X} < \infty$; this is the case for all example tessellations considered.  Recall that an object $x$ of $\mathsf{X}$ is said to be {\it adjacent} to an object $y$ of $\mathsf{Y}$ if either $x\subseteq y$ or $y\subseteq x$. Let $\mu_\mathsf{XY}$ be the mean number of $\mathsf{Y}$-type objects adjacent to the typical object of $\mathsf{X}$. Formally the {\it typical} object of class $\mathsf{X}$ can be introduced by means of Palm distributions for which we refer to \cite{SW}, \cite{SKM}. Intuitively it can be considered as a uniformly selected object from $\mathsf{X}$ independent of its size and shape. For an element $x \in \mathsf{X}$ the number of $\mathsf{Y}$-type objects adjacent to $x$ is denoted by $m_\mathsf{Y}(x)$. Formally, we write $\mu_\mathsf{XY}:=\mathbb{E}_\mathsf{X}[m_\mathsf{Y}(x)]$, where $\mathbb{E}_\mathsf{X}$ denotes an expectation for the typical object of type $\mathsf{X}$ with respect to the Palm measure.

Because of combinatorial and topological relations within a spatial tessellation the twelve adjacency mean values  $\mu_\mathsf{XY}$, for  $\mathsf{X}$ and $\mathsf{Y}$ $\ \in \{\mathsf{V},\mathsf{E},\mathsf{P},\mathsf{Z}\}$ and $\mathsf{X} \not= \mathsf{Y}$ can be expressed as functions of three {\it cyclic adjacency parameters}\\[2mm]
\hspace*{5mm}
\begin{tabular}{lcl}
$\mu_\mathsf{VE}$ & -- &  the mean number of edges emanating from the typical vertex, \\
$\mu_\mathsf{EP}$ & -- & the mean number of plates emanating from the typical edge and \\
$\mu_\mathsf{PE}$ & -- & the mean number of vertices on the boundary of the typical \\ &&plate,
\end{tabular}\\[2mm]
see \cite{WC} (or also \cite{rad} and  \cite{M84} where another notation is used).

In the facet-to-facet case, the $k$-dimensional faces of an $\mathsf{X}$-type object are primitive elements. In contrast for non facet-to-facet tessellations we must carefully distinguish between the primitive elements and the k-faces of polytopes. For example a cell can have vertices on its boundary which are not $0$-faces of that polytope. A $1$-face (ridge) of a cell can have vertices in its relative interior, this is impossible for edges. A $2$-face (facet) of a cell  may not be a plate. Hence we use the notation $\mathsf{X_k}$ for the class of all $k$-faces of $\mathsf{X}$-type polytopes, $k<\dim (\mathsf{X}$-object). For instance $\mathsf{P_1}$ is the class of the $1$-dimensional faces of all plates, called the plate-sides. We emphasize that some of these classes are {\it multisets} because of the multiplicities of the elements. For example if a vertex $v$ is a 0-face of $j$ cells (note that $j \leq m_\mathsf{Z}(v)$) then the class $\mathsf{Z_0}$ has $j$ elements equal to $v$. Furthermore, we define $n_k(x)$ as the number of $k$-faces of a particular object $x\in \mathsf{X}$ and $\nu_k(\mathsf{X}):=\mathbb{E}_\mathsf{X}[n_k(x)]$ is the mean number of $k$-faces of the typical $\mathsf{X}$-object. For example it is\\[2mm]
\hspace*{5mm}
\begin{tabular}{lcl}
$\nu_0(\mathsf{P})$ & -- &  the mean number of $0$-faces of the typical plate, \\
$\nu_1(\mathsf{Z})$ & -- & the mean number of $1$-faces (ridges) of the typical cell.
\end{tabular}\\[2mm]
Sometimes we use $\mathsf{X[.]}$ for a subset of the class $\mathsf{X}$, where the term in the brackets is a suitable chosen symbol describing the property of the subclass. For example, the subclasses of horizontal and vertical edges are denoted by $\mathsf{E[hor]}$ and $\mathsf{E[vert]}$.

If a tessellation is not facet-to-facet, a face of a primitive element can have interior structure. To quantify the effects of this phenomenon four additional parameters are introduced in \cite{WC}, called {\it interior parameters} and defined as follows:\\[2mm]
\hspace*{5mm}
\begin{tabular}{lcl}
$\xi$ & -- & the proportion of edges whose interiors are contained in the interior \\ && of some cell-facet, \\
$\kappa$ & -- & the proportion of vertices in the tessellation contained in the interior \\ && of some cell-facet, \\
$\psi$ & -- & the mean number of ridge-interiors adjacent to the typical vertex, \\
$\tau$ & -- & the mean number of plate-side-interiors adjacent to the typical vertex.
\end{tabular}\\[2mm]
Note that the interior parameters using the adjacency notation can be written as $$\xi=\mu_\mathsf{EZ_2}\ccn,\ \
\kappa=\mu_\mathsf{VZ_2}\nc, \ \
\psi=\mu_\mathsf{VZ_1}\nc \ \ {\rm  and} \ \
\tau=\mu_\mathsf{VP_1}\nc,$$
using $\mathsf{\accentset{\circ}{X}}$ for the class of relative interiors of members of $\mathsf{X}$. We call an edge whose interior is contained in the interior of a cell-facet a {\it $\pi$-edge} and a vertex in the interior of a cell-facet is a {\it hemi-vertex}, see \cite{WC}.

Naturally all four interior parameters are zero in the facet-to-facet case. In \cite{CW} it is shown that a spatial tessellation is facet-to-facet with probability 1 if and only if $\xi =0$.

Some further notations will be given later.

The initial point of the construction of a column tessellation is a stationary planar tessellation ${\mathcal Y'}$ in a fixed plane $\mathcal{E}$ which, without loss of generality, is assumed horizontal.  The classes  of \emph{planar} primitive elements of ${\mathcal Y}'$ are $\mathsf{V}$ (vertices), $\mathsf{E} $ (edges) and $\mathsf{Z} $ (cells). Their intensities $\lambda'_\mathsf{X}$ and the adjacency mean values $\mu'_\mathsf{XY}$, $\mathsf{X},\mathsf{Y} \in \{\mathsf{V},\mathsf{E},\mathsf{Z}\}$, are marked with a prime. A planar tessellation which is not side-to-side has vertices located in the interior of  cell-sides. We call them {\it $\pi$-vertices}, because one angle created by the emanating edges is equal to $\pi$. The {\it interior parameter} of a planar tessellation is \\[2mm]
\hspace*{5mm}
\begin{tabular}{lcl}
$\phi$ & -- & the proportion of $\pi$-vertices in the tessellation, $\phi = \mu'_\mathsf{V\accentset{\circ}{Z}_1}$.
\end{tabular}

\section{Column tessellations}
\label{generalmodel}
\subsection{Construction}
Based on the planar tessellation ${\mathcal Y}'$ in $\mathcal{E}$ we construct the spatial column tessellation ${\mathcal Y}$ in the following way:

For each cell $z$ of ${\mathcal Y}'$, we consider an infinite cylindrical column based on this cell and perpendicular to $\mathcal{E}$.  Further we mark $z$'s centroid with a real-valued positive $\rho_z$. Here $\rho_z$ is a non-random function of some aspects of $\mathcal{Y'}$ viewed from $z$,  perhaps the size, shape or environment of the cell $z$, say. Such a mark is created for all cells in ${\mathcal Y}'$. Now, for each planar cell $z$, we construct on the line   going through the cell-centroid of $z$ and perpendicular to $\mathcal{E}$ a stationary  point processes with intensity $\rho_z$. The point processes on different lines are  conditionally independent given the information in the planar tessellation ${\mathcal Y}'$. To create the spatial tessellation, a column based on $z$ is intersected by horizontal plates, one of these containing each of the random points of that column's point process. The resulting tessellation ${\mathcal Y}$ is called {\em column tessellation}.  Note that the lines through the cell-centroids do not belong to the column tessellation. Any cell of ${\mathcal Y}$ is a right prism, where its base facet is a vertical translation of a cell of ${\mathcal Y}'$. Because of the conditional independence, there are no coincidences among the horizontal plates that appear in different columns, and so (with probability $1$) the cells in neighbouring columns do not have a common facet. Hence a column tessellation is not facet-to-facet. The intersection of  a column tessellation ${\mathcal Y}$ with any plane parallel to $\mathcal{E}$ is a vertical translation of ${\mathcal Y}'$.

A simple case of this general construction is when we take $\rho_z=1$, a constant for all cells $z$ of ${\mathcal Y}'$. For a column let $\zeta_k$, $k = 0, \pm 1, \pm 2,...$, be the random distances of the intersection planes from $\mathcal{E}$, then $\zeta_0$ is uniformly distributed in $[0,1]$ and $\zeta_{k+1}=\zeta_k + 1$ for all $k$. The positions of the cuts in a column are stationary  and \emph{completely} independent of the cuts in the neighbouring cylinders, as no information has been drawn from ${\mathcal Y}'$. Any cell of the column tessellation ${\mathcal Y}$ that has arisen is a right prism with height $1$. For short, we call it a \emph{column tessellation with height 1}. An example is given in Figure \ref{figcoltess}. On the top on the left the planar tessellation ${\mathcal Y}'$ is shown and the columns formed by the cells of ${\mathcal Y}'$ on the right. On the bottom left we see the columns with the cuts generated by the parallel horizontal plates, using three different colors for three columns. Down the right we strike ${\mathcal Y}'$ off because it is not a part of the column tessellation ${\mathcal Y}$.

\begin{figure}[top]
\centering
\includegraphics[scale=0.4]{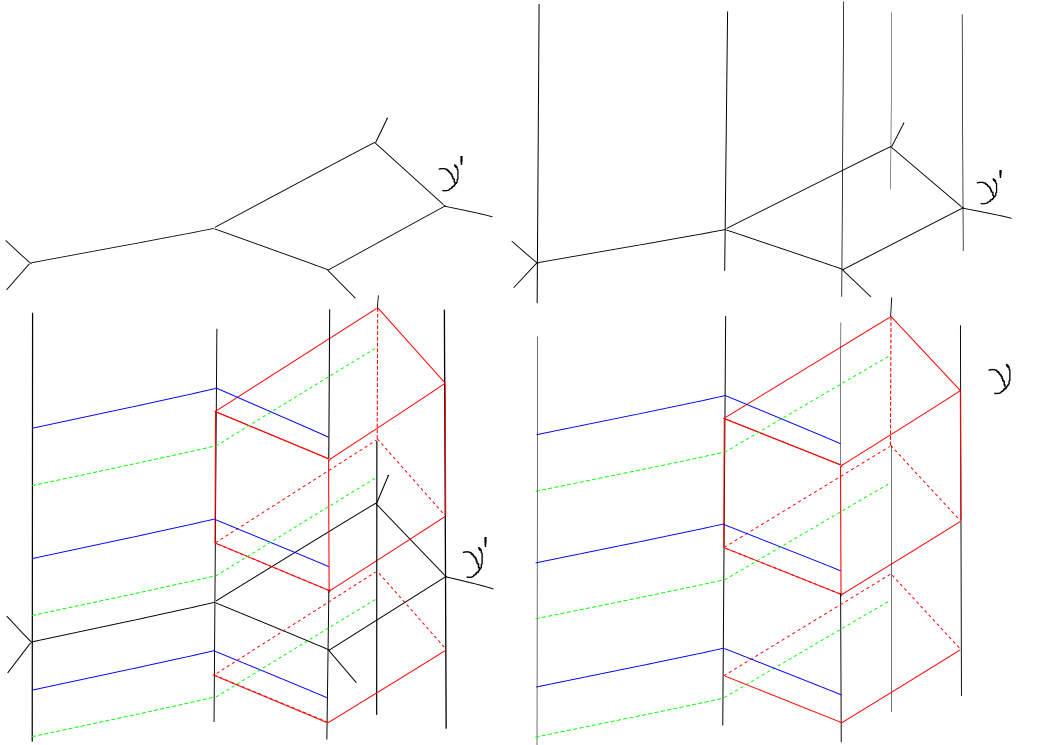}
\caption{Column tessellation ${\mathcal Y}$ with constant height $1$}
\label{figcoltess}
\end{figure}

\subsection{Notations}
Besides the basic notations given in Section \ref{notations}, we need further notations for planar tessellations. Some of these are based on a relationship between cells and lower dimensional objects of the planar tessellation - an {\it ownership relation}. We describe the ownership relation using a function $b$ ({\it belonging to}) as follows. A cell $z$ is the owner $z=b(z_j)$ of an element $z_j \in \mathsf{Z}_j$, if $z_j$ is a j-face of that cell $z$, $j=0,1$. It is obvious that $z$ is the owner of $n_j(z)$ j-faces and that any $z_j \in \mathsf{Z}_j$ has its unique owner. Furthermore we are interested in the vertices of a cell which are not corners (0-faces) of that cell. It is obvious that those vertices are $\pi$-vertices. We say that $z$ is the owner of such a $\pi$-vertex which is not a $0$-face of $z$. Thus any $\pi$-vertex $v[\pi]$ belongs to a unique owner-cell $z=b(v[\pi])$ and a cell $z$ owns $m_\mathsf{V}(z)-n_0(z)$ $\pi$-vertices. So our \emph{belongs to} function $b$ has domain $\mathsf{Z_0}\cup \mathsf{Z_1} \cup \mathsf{V}[\pi]$ and range $\mathsf{Z}$.\\

Ensuing from the mark $\rho_z$ of a cell $z$ we define the following notations for the planar tessellation ${\mathcal Y}'$:
\begin{itemize}
\item based on the planar adjacency relationship `$x$ is adjacent to $z \in \mathsf{Z}$'

$\alpha_x = \sum\limits_{\{z:z \supset x\}} \rho_z$, \\ where we later mostly consider the cases  $x=v \in \mathsf{V}$, $x=e \in \mathsf{E}$ and $x=v[\pi] \in \mathsf{V}[\pi]$,
\newpage
\item based on the ownership relation `$z \in \mathsf{Z}$ owns $z_0 \in \mathsf{Z}$ or $v[\pi] \in \mathsf{Z}$',

$\beta_{z_0} = \rho_{b(z_0)}$,

$\beta_{v[\pi]}=\rho_{b(v[\pi])}$, \\[2mm] where  $z_0$ is  a $0$-face of the cell $b(z_0)$ and $v[\pi]$ is a $\pi$-vertex and no $0$-face of the cell $b(v[\pi])$ and,
\item based on a weighting,

$\gamma_v =m'_\mathsf{Z}(v) \alpha_v,$ \quad (number-weighted)

$\gamma_e =\ell'(e)\alpha_e$, \quad (length-weighted)

$\gamma_z =a'(z)\rho_z$,  \quad (area-weighted)\\[2mm]
where  $\ell'(e)$ is the length of the edge $e$ and $a'(z)$ is the area of the cell $z$.
\end{itemize}

\begin{figure}[top]
\centering
\includegraphics[scale=0.24]{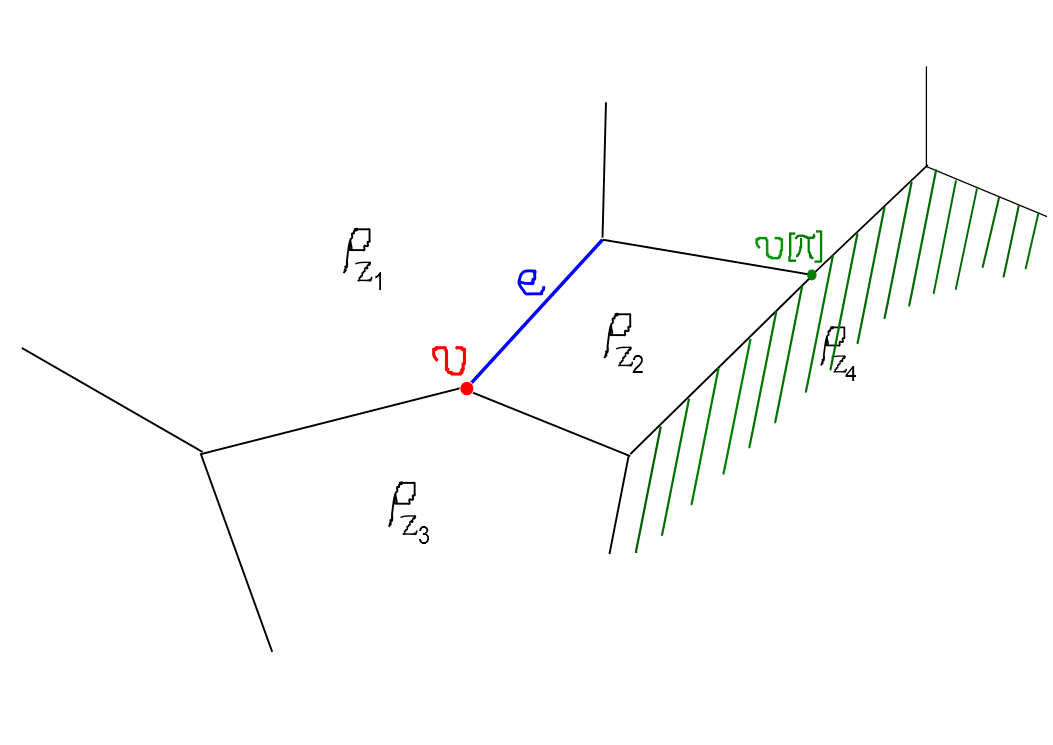}
\caption{An example of adjacency and ownership relation in the planar tessellation $\mathcal{Y}'$}
\label{adjrel}
\end{figure}	

Figure \ref{adjrel} illustrates an example for these notations and the differences between ownership and adjacency relation. The vertex $v$ is adjacent to the cells $z_1$, $z_2$ and $z_3$, the edge $e$ is adjacent to the cells $z_1$ and $z_2$, hence $\alpha_v=\rho_{z_1}+\rho_{z_2}+\rho_{z_3}$ and $\alpha_e=\rho_{z_1}+\rho_{z_2}.$ For the ownership relation, it is easy to see that for the $\pi$-vertex $v[\pi]$ we have $\beta_{v[\pi]}=\rho_{z_4}$,  because $z_4=b(v[\pi])$. Besides, $v$ is a $0$-face of the cells $z_1$, $z_2$ and $z_3$, then the class $\mathsf{Z_0}$ has $3$ elements equal to $v$ denoted by $z_{01}$, $z_{02}$ and $z_{03}$ with owner-cells $z_1$, $z_2$ and $z_3$, respectively. Hence $\beta_{z_{01}}=\rho_{z(z_{01})}=\rho_{z_1}$, $\beta_{z_{02}}=\rho_{z_2}$ and $\beta_{z_{03}}=\rho_{z_3}$.

All these $\alpha$-, $\beta$-, $\gamma$-quantities can be understood as marks of elements of the planar tessellation. Each of these marks leads to mark distributions. The corresponding means are:

\begin{itemize}
\item $\bar{\rho}_\mathsf{Z}=\mathbb{E}'_\mathsf{Z}(\rho_z)$ \ -- \ the mean $\rho$-intensity of the typical cell,
\item $\bar{\alpha}_\mathsf{X}=\mathbb{E}'_\mathsf{X}(\alpha_x)$ \ -- \ the mean {\bf total} $\rho$-intensity of all cells adjacent to the typical $\mathsf{X}$-object,
\item $\bar{\beta}_{\mathsf{Z_0}}=\mathbb{E}'_{\mathsf{Z_0}}(\beta_{z_0})$  and   $\bar{\beta}_\mathsf{V[\pi]}=\mathbb{E}'_\mathsf{V[\pi]}(\beta_{v[\pi]})$ \ -- \ the mean $\rho$-intensity of the owner cell of the typical $0$-face or the typical $\pi$-vertex, respectively, \\
and
\item $\bar{\gamma}_\mathsf{X}=\mathbb{E}'_\mathsf{X}(\gamma_x)$ \ -- \ the mean total weighted $\rho$-intensity of all cells adjacent to the typical $\mathsf{X}$-object.
\end{itemize}
\begin{rem} \label{rem:second-order}
Using mean value identities for tessellations given in \cite{MO} and  some generalizations derived recently in \cite{wnc}, most of the above mean values can be expressed as second-order quantities depending on the $\rho$-intensity as follows:
\begin{itemize}
\item $\lambda'_\mathsf{X}\bar{\alpha}_\mathsf{X}=\lambda'_\mathsf{Z}\mathbb{E}'_\mathsf{Z}(m'_\mathsf{X}(z)\rho_z),$
\item $\lambda'_{\mathsf{Z_0}}\bar{\beta}_{\mathsf{Z_0}}=\lambda'_\mathsf{Z}\mathbb{E}'_\mathsf{Z}(n'_0(z)\rho_z)$,
\item $\lambda'_\mathsf{V[\pi]}\bar{\beta}_\mathsf{V[\pi]}=\lambda'_\mathsf{Z}\mathbb{E}'_\mathsf{Z}[(m'_\mathsf{V}(z)-n'_0(z))\rho_z]=\lambda'_\mathsf{V}\bar{\alpha}_\mathsf{V}-\lambda'_{\mathsf{Z_0}}\bar{\beta}_{\mathsf{Z_0}}$,
\item $\lambda'_\mathsf{E}\bar{\gamma}_\mathsf{E}=\lambda'_\mathsf{Z}\mathbb{E}'_\mathsf{Z}(\ell'(z)\rho_z)$, where $\ell'(z)$ is the perimeter of the planar cell $z$,
\item $\lambda'_\mathsf{Z}\bar{\gamma}_\mathsf{Z}=\lambda'_\mathsf{Z}\mathbb{E}'_\mathsf{Z}(a'(z)\rho_z)$.
\end{itemize}
Only $\bar{\gamma}_\mathsf{V}$ requires a separate argument: $$\lambda'_\mathsf{V}\bar{\gamma}_\mathsf{V}=\lambda'_\mathsf{Z}\mathbb{E}'_\mathsf{Z}[(k'_\mathsf{E}(z) +m'_\mathsf{V}(z))\rho_z]=\lambda'_\mathsf{Z}\mathbb{E}'_\mathsf{Z}(k'_\mathsf{E}(z)\rho_z)+\lambda'_\mathsf{V}\bar{\alpha}_\mathsf{V},$$ where $k'_\mathsf{E}(z):=\sum\limits_{e\in\mathsf{E}}{\bf1}\{e\cap z\ne\emptyset\}$ -- the number of edges intersecting $z$.
\end{rem}
\begin{rem}
To illustrate these mean values we consider now the special case when $\rho_z=1$ for all $z\in\mathsf{Z}$.

\label{rem:intensity1}
\begin{align*}
\bar{\rho}_\mathsf{Z}&=1,\\
\bar{\alpha}_\mathsf{V}&=\mu'_\mathsf{VZ}=\mu'_\mathsf{VE},\\
\bar{\alpha}_\mathsf{E}&= 2, \\
\bar{\alpha}_{\mathsf{V[\pi]}}&= \mu'_\mathsf{V[\pi]Z} = \mu'_\mathsf{V[\pi]E},\\
\bar{\beta}_{\mathsf{Z_0}}&=1,\\
\bar{\beta}_{\mathsf{\mathsf{V[\pi]}}}&=1,\\
\bar{\gamma}_\mathsf{V}&=\mu'^{(2)}_\mathsf{VZ}=\mu'^{(2)}_\mathsf{VE},\\
\bar{\gamma}_\mathsf{E}&=2\bar{\ell}'_\mathsf{E},\\
\bar{\gamma}_\mathsf{Z}&=\bar{a}'_\mathsf{Z},
\end{align*}
where \\
\hspace*{5mm}
\begin{tabular}{lcl}
$\mu'_\mathsf{V[\pi]E}$ & -- & the mean number of emanating edges from the typical $\pi$-vertex, \\
$\mu'^{(2)}_\mathsf{VE}$ & -- & the second moment of the mean number of edges adjacent to the \\ && typical vertex, \\
$\bar{\ell}'_\mathsf{E}$ & -- & the mean length of the typical edge, \\
$\bar{a}'_\mathsf{Z}$ & -- & the mean area of the typical cell.
\end{tabular}\\[2mm]
Formally, the second moment of the mean number of edges adjacent to the typical vertex is given by $\mathbb{E}'_\mathsf{V}[m'_\mathsf{E}(v)^2]$, where $\mathbb{E}'_\mathsf{V}$ denotes an expectation for the typical vertex of ${\mathcal Y}'$ with respect to the Palm measure, see \cite{SKM}.
\end{rem}
The first and the last of the above relations are obvious. The relations for the three $\alpha$-means follow from $\alpha_x = m'_\mathsf{Z}(x)$ in the case $\rho_z=1$. The $ \beta$-mean value relations arise from that fact that the owner-cell of the typical corner or the typical $\pi$-vertex, respectively, has $\rho$-intensity 1. And the first two $\gamma$-mean values we obtain using again $\alpha_x = m'_\mathsf{Z}(x)$ for $x=v$ and $x=e$.

Considering again the general construction, our aim is the calculation of intensities and mean values of the column tessellation ${\mathcal Y}$ from the characteristics of ${\mathcal Y}'$. For this purpose the following basic relations between vertices and edges of ${\mathcal Y}$ and ${\mathcal Y}'$ are helpful.

\subsection{Basic properties}
For a vertex $v \in \mathsf{V}$ in ${\mathcal Y}'$ we consider the vertical line $\mathcal{L}_v$ through $v$  and its intersection with the columns created by the planar cells adjacent to $v$. The horizontal plates in these columns create a point process  (comprising vertices of the spatial tessellation $\mathcal{Y}$) on $\mathcal{L}_v$, this point process being the superposition of point processes with $\rho$-intensities from the planar cells adjacent to $v$. Hence it has intensity $\alpha_v$. For short we say that $v$ has $\alpha_v$ {\it corresponding} vertices in ${\mathcal Y}$.

\begin{property}
\label{property1}
Let $v$ be a vertex in ${\mathcal Y}'$. Then $v$ has $\alpha_v$ corresponding vertices in ${\mathcal Y}$ and each one
is adjacent to $m'_\mathsf{E}(v)+1$ cells and to $m'_\mathsf{E}(v)+3$ plates of ${\mathcal Y}$.
\end{property}

Furthermore the column tessellation has only horizontal and vertical edges denoted by $\mathsf{E[hor]}$ and $\mathsf{E[vert]}$, respectively. All horizontal edges are $\pi$-edges with three emanating plates.
For each edge $e$ of ${\mathcal Y}'$, we have two planar cells adjacent to this edge. When we cut the two corresponding columns by different horizontal planes, the intensity of horizontal edges of ${\mathcal Y}$ in the common face of the two neighbouring columns is $\alpha_e$, and all these edges are translations of $e$. Besides, the intensity of vertical edges of $\mathcal{Y}$ on a line $\mathcal{L}_v$ is $\alpha_v$.

\begin{property}
\label{property2}
An edge $e$ of ${\mathcal Y}'$ corresponds to $\alpha_e$ horizontal edges of ${\mathcal Y}$.\\
Any horizontal edge of ${\mathcal Y}$ is a $\pi$-edge with three emanating plates, two of them are vertical, the third one is a horizontal plate.\\
A vertex $v$ of ${\mathcal Y}'$ corresponds to  $\alpha_v$ vertical edges of ${\mathcal Y}$, where each one is adjacent to $m'_\mathsf{E}(v)$ plates of ${\mathcal Y}$.
 \end{property}
 These correspondence relations between ${\mathcal Y}'$ and ${\mathcal Y}$ will be more and more refined in due course.

\section{Relations for characteristics of a column tessellation}
\label{relations}
\subsection{Intensities of primitive elements of column tessellations} \ \\
As a first step we will consider how the intensities $\lambda_\mathsf{X}$ of the primitive elements $\mathsf{X} \in \{\mathsf{V},\mathsf{E},\mathsf{P},\mathsf{Z}\}$ of a column tessellation ${\mathcal Y}$ depend on characteristic of the planar tessellation ${\mathcal Y}'$. For those relations we need the intensities $\lambda'_\mathsf{Z}$ and $\lambda'_\mathsf{V}$, the mean  $\rho$-intensity $\bar{\rho}_\mathsf{Z}$ and the mean total $\rho$-intensity $\bar{\alpha}_\mathsf{V}$ of ${\mathcal Y}'$:

\begin{prop}
\label{prop:int}
The intensities of primitive elements of a column tessellation ${\mathcal Y}$ depend on ${\mathcal Y}'$ and the cell marks $\rho_z$ as follows
\begin{enumerate}
\item [(i)] $\lambda_\mathsf{V}=\lambda'_\mathsf{V}\bar{\alpha}_\mathsf{V}$,
\item [(ii)] $\lambda_\mathsf{E}=2\lambda'_\mathsf{V}\bar{\alpha}_\mathsf{V}$,
\item [(iii)] $\lambda_\mathsf{P}=\lambda'_\mathsf{V}\bar{\alpha}_\mathsf{V}+\lambda'_\mathsf{Z}\bar{\rho}_\mathsf{Z}$,
\item [(iv)] $\lambda_\mathsf{Z}=\lambda'_\mathsf{Z}\bar{\rho}_\mathsf{Z}$.
\end{enumerate}
\emph{For a refined partition of the classes $\mathsf{E}$ and $\mathsf{P}$ of ${\mathcal Y}$ into
horizontal and vertical elements we obtain}
\begin{enumerate}
\item[(v)] $\lambda_\mathsf{E[hor]}=\lambda_\mathsf{E[vert]}=\lambda'_\mathsf{V}\bar{\alpha}_\mathsf{V}$,
\item[(vi)] $\lambda_\mathsf{P[hor]}=\lambda'_\mathsf{Z}\bar{\rho}_\mathsf{Z}$,
\ $\lambda_\mathsf{P[vert]}=\lambda'_\mathsf{V}\bar{\alpha}_\mathsf{V}$.
\end{enumerate}\end{prop}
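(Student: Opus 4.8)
The plan is to deduce all of (i)--(vi) from one \emph{vertical disintegration} principle, and then to identify, case by case, the planar object a spatial element projects onto together with the linear intensity of the corresponding vertical point process. Because the cutting processes on the vertical lines are stationary, $\mathcal{Y}$ is stationary under vertical translations, and it inherits horizontal stationarity from $\mathcal{Y}'$. For any class of elements of $\mathcal{Y}$ I wish to count, each centroid projects horizontally onto the centroid of a uniquely determined planar object $x$ of a fixed type $\mathsf{X}'\in\{\mathsf{V},\mathsf{E},\mathsf{Z}\}$, and along the vertical these centroids form a stationary point process of some linear intensity $c(x)$. Combining the planar Campbell--Palm identity (the mean of $\sum c(x)$ over planar $\mathsf{X}'$-objects with centroid in a window $W$ equals $\lambda'_{\mathsf{X}'}\mathbb{E}'_{\mathsf{X}'}(c(x))\,|W|$) with the per-unit-height factor $c(x)$ supplied by vertical stationarity, the volume intensity of the class is
\[
\lambda_{\text{class}}=\lambda'_{\mathsf{X}'}\,\mathbb{E}'_{\mathsf{X}'}\!\big(c(x)\big).
\]
All the stated formulas follow once $\mathsf{X}'$ and $c(x)$ are read off.

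For the vertices, Property \ref{property1} says the spatial vertices lie on the lines $\mathcal{L}_v$ over planar vertices $v$ with linear intensity $\alpha_v$, so $\lambda_\mathsf{V}=\lambda'_\mathsf{V}\bar{\alpha}_\mathsf{V}$, which is (i). For the cells, each cell of $\mathcal{Y}$ is a prism over a planar cell $z$ with centroid over the centroid of $z$ and vertical linear intensity $\rho_z$; each cutting (horizontal) plate is likewise a translate of $z$ with the same vertical intensity $\rho_z$. Hence $\lambda_\mathsf{Z}=\lambda'_\mathsf{Z}\bar{\rho}_\mathsf{Z}$ (this is (iv)) and $\lambda_\mathsf{P[hor]}=\lambda'_\mathsf{Z}\bar{\rho}_\mathsf{Z}$ (half of (vi)). For the edges, Property \ref{property2} gives horizontal edges over $e$ with linear intensity $\alpha_e$ and vertical edges over $v$ with linear intensity $\alpha_v$, so $\lambda_\mathsf{E[hor]}=\lambda'_\mathsf{E}\bar{\alpha}_\mathsf{E}$ and $\lambda_\mathsf{E[vert]}=\lambda'_\mathsf{V}\bar{\alpha}_\mathsf{V}$.

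To obtain (v), and then (ii) via $\lambda_\mathsf{E}=\lambda_\mathsf{E[hor]}+\lambda_\mathsf{E[vert]}$, I would show $\lambda'_\mathsf{E}\bar{\alpha}_\mathsf{E}=\lambda'_\mathsf{V}\bar{\alpha}_\mathsf{V}$. By the second-order relations of Remark \ref{rem:second-order},
\[
\lambda'_\mathsf{E}\bar{\alpha}_\mathsf{E}=\lambda'_\mathsf{Z}\,\mathbb{E}'_\mathsf{Z}\!\big(m'_\mathsf{E}(z)\rho_z\big),\qquad \lambda'_\mathsf{V}\bar{\alpha}_\mathsf{V}=\lambda'_\mathsf{Z}\,\mathbb{E}'_\mathsf{Z}\!\big(m'_\mathsf{V}(z)\rho_z\big),
\]
and since the boundary of a convex planar cell is a cycle in which tessellation-vertices and tessellation-edges (including any $\pi$-vertices and the edges they create) alternate, one has $m'_\mathsf{V}(z)=m'_\mathsf{E}(z)$ for every $z$; the two right-hand sides therefore agree.

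The most delicate case is the vertical plates, and this is where I expect the main obstacle. On the wall over an edge $e$ the horizontal edges occur at the union of the cut heights of the two columns adjacent to $e$, a stationary point process of intensity $\rho_{z_1}+\rho_{z_2}=\alpha_e$. I would argue from the definition of a plate as a $2$-dimensional primitive element having no element in its relative interior that the vertical plates on this wall are exactly the rectangular strips between two consecutive such heights --- each strip is one plate rather than a superposition of a ``left'' and a ``right'' facet, so the vertical linear intensity is $\alpha_e$ and \emph{not} $2\alpha_e$. Disintegration then yields $\lambda_\mathsf{P[vert]}=\lambda'_\mathsf{E}\bar{\alpha}_\mathsf{E}=\lambda'_\mathsf{V}\bar{\alpha}_\mathsf{V}$, completing (vi), and $\lambda_\mathsf{P}=\lambda_\mathsf{P[hor]}+\lambda_\mathsf{P[vert]}$ gives (iii). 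Correctly justifying this single-count for the vertical plates, using the non-facet-to-facet geometry, is the crux of the argument.
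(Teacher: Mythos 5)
Your proposal is correct, and for parts (i), (ii), (iv), (v) it follows the same path as the paper: Properties \ref{property1} and \ref{property2} give the vertical linear intensities, and the identity $\lambda'_\mathsf{E}\bar{\alpha}_\mathsf{E}=\lambda'_\mathsf{Z}\mathbb{E}'_\mathsf{Z}(m'_\mathsf{E}(z)\rho_z)=\lambda'_\mathsf{Z}\mathbb{E}'_\mathsf{Z}(m'_\mathsf{V}(z)\rho_z)=\lambda'_\mathsf{V}\bar{\alpha}_\mathsf{V}$ (resting on the alternation $m'_\mathsf{V}(z)=m'_\mathsf{E}(z)$ around each cell boundary) is exactly the mean value relation the paper invokes. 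The genuine divergence is in how (iii) and (vi) are obtained. The paper gets $\lambda_\mathsf{P}$ first, from the Euler-type identity $\lambda_\mathsf{V}-\lambda_\mathsf{E}+\lambda_\mathsf{P}-\lambda_\mathsf{Z}=0$ for stationary spatial tessellations (imported from \cite{WC}), and then recovers $\lambda_\mathsf{P[vert]}=\lambda_\mathsf{P}-\lambda_\mathsf{P[hor]}$ by subtraction; you instead compute $\lambda_\mathsf{P[vert]}$ directly, by observing that the plates in the wall over a planar edge $e$ are precisely the rectangular strips between consecutive cut heights of the two adjacent columns, a stationary process of intensity $\alpha_e$, so that $\lambda_\mathsf{P[vert]}=\lambda'_\mathsf{E}\bar{\alpha}_\mathsf{E}=\lambda'_\mathsf{V}\bar{\alpha}_\mathsf{V}$, after which (iii) is just the sum $\lambda_\mathsf{P[hor]}+\lambda_\mathsf{P[vert]}$. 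Your single-counting of each strip is justified: a plate is a primitive element of the tessellation and is counted once, whereas it is the facet class $\mathsf{Z_2}$ that carries multiplicities (the left and right cells contribute separate facets); your worry about a factor $2$ dissolves precisely because of this distinction, which the paper itself emphasizes. What each route buys: the paper's argument is shorter but leans on a nontrivial topological identity proved elsewhere; yours is more self-contained and elementary, makes the tacit disintegration principle explicit, and as a by-product verifies that the column tessellation satisfies the Euler relation rather than assuming it.
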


\begin{proof}
\noindent
With Property \ref{property1} we obtain (i).

Using Property \ref{property2} and the mean value relation $\lambda'_\mathsf{V}\bar{\alpha}_\mathsf{V}=\lambda'_\mathsf{Z}\mathbb{E}'_\mathsf{Z}(m'_\mathsf{V}(z)\rho_z)=\qquad $ $\lambda'_\mathsf{Z}\mathbb{E}'_\mathsf{Z}(m'_\mathsf{E}(z)\rho_z)=\lambda'_\mathsf{E}\bar{\alpha}_\mathsf{E}$ we have (v) and $\lambda_\mathsf{E}=\lambda_\mathsf{E[hor]}+\lambda_\mathsf{E[vert]}$ yields (ii).\\
From the construction of the column tessellation (iv) is obvious.\\
With $\lambda_\mathsf{V}-\lambda_\mathsf{E}+\lambda_\mathsf{P}-\lambda_\mathsf{Z}=0$ we obtain (iii)
and $\lambda_\mathsf{P[hor]}=\lambda_\mathsf{Z}$ leads to (vi).
\end{proof}

Further intensities can be calculated using properties of the column tessellation or relations given in Theorem \ref{thm:adja} and in \cite{WC}, for example\\
\hspace*{5mm}
\begin{tabular}{ll}
intensity of plate-sides: & $\lambda_\mathsf{P_1}=\lambda'_\mathsf{Z_0}\bar{\beta}_\mathsf{Z_0}+4\lambda'_\mathsf{V}\bar{\alpha}_\mathsf{V}$, \\
intensity of cell-facets: & $\lambda_\mathsf{Z_2}=2\lambda'_\mathsf{Z}\bar{\rho}_\mathsf{Z}+\lambda'_\mathsf{Z_0}\bar{\beta}_\mathsf{Z_0}$,\\
intensity of cell-ridges: & $\lambda_\mathsf{Z_1}=3\lambda'_\mathsf{Z_0}\bar{\beta}_\mathsf{Z_0}.$
\end{tabular}\\
Note that for the calculation of those intensities, the interior parameter $\phi$ of the planar tessellation is a necessary input, because $\lambda'_\mathsf{Z_0}$ depends on $\phi$.

\subsection{Topological mean values of column tessellations} \ \\
We now present the three adjacency parameters $\mu_{\mathsf{VE}},\ \mu_\mathsf{EP},\ \mu_\mathsf{PV}$ and the four interior parameters $\xi,\ \kappa,\ \psi,\ \tau$ of a column tessellation. To clarify their dependence on the basic planar tessellation ${\mathcal Y}'$, we need from ${\mathcal Y}'$ the mean number of emanating edges of the typical vertex $\mu'_\mathsf{VE}$, the interior parameter $\phi$ and five already--mentioned mean values $\bar{\rho}_\mathsf{Z}$, $\bar{\alpha}_\mathsf{V}$, $\bar{\alpha}_{\mathsf{V[\pi]}}$, $\bar{\beta}_{\mathsf{Z_0}}$, $\bar{\gamma}_\mathsf{V}$.

\begin{thm}
\label{thm:adja}
The seven topological mean values of a column tessellation ${\mathcal Y}$ are given by seven parameters of the underlying planar tessellation ${\mathcal Y}'$ as follows
\begin{align}
\mu_\mathsf{VE}&=4,\\
\mu_\mathsf{PV}&=\frac{2(3\bar{\alpha}_\mathsf{V}+\bar{\gamma}_\mathsf{V})}{2\bar{\alpha}_\mathsf{V}+(\mu'_\mathsf{VE}-2)\bar{\rho}_\mathsf{Z}},\\
\mu_\mathsf{EP}&=\frac{1}{2}\frac{\bar{\gamma}_\mathsf{V}}{\bar{\alpha}_\mathsf{V}}+\frac{3}{2},\\
\xi&=\frac{1}{2}\phi\frac{\bar{\alpha}_{\mathsf{V[\pi]}}}{\bar{\alpha}_\mathsf{V}}+\frac{1}{2},\\
\kappa&=\phi\frac{\bar{\alpha}_{\mathsf{V[\pi]}}}{\bar{\alpha}_\mathsf{V}}+(\mu'_\mathsf{VE}-\phi)\frac{\bar{\beta}_{\mathsf{Z_0}}}{\bar{\alpha}_\mathsf{V}}-1,\\
\psi&=\frac{\bar{\gamma}_\mathsf{V}}{\bar{\alpha}_\mathsf{V}}-\phi\frac{\bar{\alpha}_{\mathsf{V[\pi]}}}{\bar{\alpha}_\mathsf{V}}-3(\mu'_\mathsf{VE}-\phi)\frac{\bar{\beta}_{\mathsf{Z_0}}}{\bar{\alpha}_\mathsf{V}}+2,\\
\tau&=\frac{\bar{\gamma}_\mathsf{V}}{\bar{\alpha}_\mathsf{V}}-(\mu'_\mathsf{VE}-\phi)\frac{\bar{\beta}_{\mathsf{Z_0}}}{\bar{\alpha}_\mathsf{V}}-1.
\end{align}
\end{thm}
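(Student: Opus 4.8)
\emph{Plan.} The unifying device is a \emph{lifting-and-weighting} correspondence between the typical objects of $\mathcal{Y}$ and weighted typical objects of $\mathcal{Y}'$. Every vertex of $\mathcal{Y}$ lies on a line $\mathcal{L}_v$ over a vertex $v$ of $\mathcal{Y}'$ which, by Property \ref{property1}, carries $\alpha_v$ of them; hence for any functional $f$ depending only on the combinatorial type of a vertex $w=(v,h)$ one has $\mathbb{E}_\mathsf{V}[f]=\mathbb{E}'_\mathsf{V}[\alpha_v f]/\bar\alpha_\mathsf{V}$. I would refine this by noting that, almost surely, the height $h$ is a cut coming from a unique cell $z^\ast$ adjacent to $v$, and that conditionally on lying over $v$ the selected cell is $z^\ast=z_0$ with probability $\rho_{z_0}/\alpha_v$ (the superposition/thinning rule for the conditionally independent cut processes of intensities $\rho_{z_0}$). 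Thus every vertex functional reduces to a $\rho_{z_0}$-weighted sum $\sum_{z_0\supset v}\rho_{z_0}(\cdots)$ averaged over the typical planar vertex. The whole proof then rests on reading off, purely combinatorially, the local picture at $w=(v,h)$ in the three cases: (a) $v$ is not a $\pi$-vertex; (b) $v$ is a $\pi$-vertex and $z^\ast=b(v)$ is its owner cell; (c) $v$ is a $\pi$-vertex and $z^\ast\neq b(v)$.

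The three genuinely local identities I would treat directly. At any $w=(v,h)$ there are exactly two vertical edges (up and down $\mathcal{L}_v$) and exactly two horizontal edges (over the two sides of $z^\ast$ meeting at $v$, cf. Property \ref{property2}), giving $\mu_\mathsf{VE}=4$ with no averaging needed. For $\mu_\mathsf{EP}$ I would split the typical edge into its horizontal and vertical halves, of equal intensity by Proposition \ref{prop:int}(v): a horizontal edge has $3$ emanating plates (Property \ref{property2}), while the typical vertical edge, being $\alpha_v$-weighted over $v$, carries $m'_\mathsf{E}(v)$ plates averaging to $\bar\gamma_\mathsf{V}/\bar\alpha_\mathsf{V}$ since $\gamma_v=m'_\mathsf{E}(v)\alpha_v$. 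For $\mu_\mathsf{PV}$ I would avoid counting vertices on a plate directly and instead invoke the mass-transport identity $\lambda_\mathsf{P}\mu_\mathsf{PV}=\lambda_\mathsf{V}\mu_\mathsf{VP}$: by Property \ref{property1} the typical vertex meets $m'_\mathsf{E}(v)+3$ plates, so $\mu_\mathsf{VP}=\bar\gamma_\mathsf{V}/\bar\alpha_\mathsf{V}+3$, and dividing by $\lambda_\mathsf{P}$ from Proposition \ref{prop:int}(iii) and simplifying with the planar Euler relation $2\lambda'_\mathsf{Z}=\lambda'_\mathsf{V}(\mu'_\mathsf{VE}-2)$ (which follows from $\lambda'_\mathsf{V}-\lambda'_\mathsf{E}+\lambda'_\mathsf{Z}=0$ and $\lambda'_\mathsf{E}=\tfrac12\lambda'_\mathsf{V}\mu'_\mathsf{VE}$) produces the stated fraction.

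The interior parameters require the case analysis above. For $\xi$ I would observe that every horizontal edge is a $\pi$-edge (Property \ref{property2}) and that a vertical edge over $v$ is a $\pi$-edge exactly when $v$ is a $\pi$-vertex (the vertical facet over the straight side of $b(v)$ then contains it in its relative interior); the $\pi$-edge intensity is therefore $\lambda'_\mathsf{V}\bar\alpha_\mathsf{V}+\phi\lambda'_\mathsf{V}\bar\alpha_\mathsf{V[\pi]}$, and dividing by $\lambda_\mathsf{E}=2\lambda'_\mathsf{V}\bar\alpha_\mathsf{V}$ gives the formula. For $\kappa,\psi,\tau$ I would tabulate, in cases (a)--(c), the number of hemi-vertex incidences (a vertex $w$ is a hemi-vertex iff $v$ is a $\pi$-vertex and $z^\ast\neq b(v)$), of ridge-interiors and of plate-side-interiors adjacent to $w$; each count is an affine function of $m'_\mathsf{E}(v)$ with correction terms appearing only in the $\pi$-vertex cases. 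Carrying out the $\rho_{z_0}$-weighted sum and the $\alpha_v$-average expresses each parameter through $\bar\gamma_\mathsf{V}$, $\bar\alpha_\mathsf{V}$, $\bar\alpha_\mathsf{V[\pi]}$ and $\phi\bar\beta_\mathsf{V[\pi]}$. The final step is to eliminate $\bar\beta_\mathsf{V[\pi]}$ using the second-order identity of Remark \ref{rem:second-order}, which with $\lambda'_\mathsf{V[\pi]}=\phi\lambda'_\mathsf{V}$ and $\lambda'_\mathsf{Z_0}=\lambda'_\mathsf{V}(\mu'_\mathsf{VE}-\phi)$ yields $\phi\bar\beta_\mathsf{V[\pi]}=\bar\alpha_\mathsf{V}-(\mu'_\mathsf{VE}-\phi)\bar\beta_\mathsf{Z_0}$; substituting this converts all three raw expressions into the stated ones.

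The main obstacle is the correct and exhaustive combinatorial bookkeeping of the local configuration at $w=(v,h)$ in the non-facet-to-facet setting---in particular, deciding for each prism facet, ridge and plate-side through $w$ whether $w$ lies in its relative interior, and keeping the three cases (a)--(c) consistent across all four incidence counts. Everything else (the averaging, the thinning weights, and the algebraic reduction via Remark \ref{rem:second-order} and the Euler relation) is routine once these local counts are pinned down.
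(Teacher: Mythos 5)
Your plan is structurally the same proof as the paper's, and every step you make explicit is correct and matches it: the $\rho$-weighted lifting rule (the cut at a spatial vertex $w=(v,h)$ comes from cell $z_0\supset v$ with probability $\rho_{z_0}/\alpha_v$), the local count $\mu_\mathsf{VE}=4$, the horizontal/vertical edge split giving $\mu_\mathsf{EP}$, the mass-transport step $\lambda_\mathsf{V}\mu_\mathsf{VP}=\lambda_\mathsf{P}\mu_\mathsf{PV}$ combined with Proposition \ref{prop:int}(iii) and the planar Euler relation $\lambda'_\mathsf{Z}=\tfrac12\lambda'_\mathsf{V}(\mu'_\mathsf{VE}-2)$, the $\pi$-edge count for $\xi$, the hemi-vertex criterion ($w$ is hemi iff $v$ is a $\pi$-vertex and $z^\ast\neq b(v)$) which settles $\kappa$, and the final elimination $\phi\bar{\beta}_\mathsf{V[\pi]}=\bar{\alpha}_\mathsf{V}-(\mu'_\mathsf{VE}-\phi)\bar{\beta}_{\mathsf{Z_0}}$ from Remark \ref{rem:second-order} are all exactly what the paper does.

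The gap is confined to $\psi$ and $\tau$: you announce the table of local incidence counts in cases (a)--(c) but never produce it, and formulas (6) and (7) \emph{are} that table after averaging, so their coefficients cannot be verified from what you wrote. The counts are, with $m=m'_\mathsf{E}(v)$: ridge-interiors adjacent to $w$ equal $m-1$ in case (a), $m+1$ in case (b), $m-2$ in case (c); plate-side-interiors equal $m-2$ in case (a), $m-1$ in case (b), $m-2$ in case (c). These are genuinely delicate, and case (b) is the trap: a $\pi$-vertex has $m$ adjacent planar cells (not $m-1$), namely the owner $b(v)$ plus $m-1$ cells having $v$ as a $0$-face. When the cut at $h$ comes from $b(v)$, the two prisms over $b(v)$ meeting at height $h$ each contribute a horizontal ridge (a translate of the full side $s\ni v$) whose relative interior contains $w$ --- two elements of the multiset $\mathsf{Z_1}$ --- while the single horizontal plate they share contributes only one plate-side-interior; together with the $m-1$ vertical ridge-interiors from the other prisms versus only $m-2$ vertical plate-side-interiors (the strips over the two edges of $s$ are divided at height $h$), this yields $m+1$ against $m-1$. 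Guessing the ``natural'' symmetric count $m-1$ for both would corrupt $\psi$. With the table in place, your weighted average, e.g. $\lambda_\mathsf{V}\psi=\lambda'_\mathsf{V[\bar{\pi}]}\mathbb{E}'_\mathsf{V[\bar{\pi}]}[\alpha_v(m-1)]+\lambda'_\mathsf{V[\pi]}\mathbb{E}'_\mathsf{V[\pi]}[\beta_{v[\pi]}(m+1)]+\lambda'_\mathsf{V[\pi]}\mathbb{E}'_\mathsf{V[\pi]}[(\alpha_{v[\pi]}-\beta_{v[\pi]})(m-2)]$, reduces via your elimination identity exactly to the stated formulas, and your argument then coincides with the paper's proof.
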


\begin{proof}
 (1) Each vertex of ${\mathcal Y}$ arises by the intersection of an infinite cylindrical column with a horizontal plane, hence the vertex has $4$ outgoing edges, $2$ of them are horizontal and the other $2$ are vertical and collinear. So we have $m_\mathsf{E}(v)=4\text{ for all }v \in \mathsf{V}$.

 \bigskip
 (2) From Property \ref{property1} we have for the mean number of plates adjacent to the typical vertex
$$\lambda_\mathsf{V}\mu_\mathsf{VP}=\lambda'_\mathsf{V}\mathbb{E}'_\mathsf{V}[\alpha_v(m'_\mathsf{E}(v)+3)]=\lambda'_\mathsf{V}\bar{\gamma}_\mathsf{V}+3\lambda'_\mathsf{V}\bar{\alpha}_\mathsf{V}.$$
With $\lambda_\mathsf{V}\mu_\mathsf{VP}=\lambda_\mathsf{P}\mu_\mathsf{PV}$ and (iii) of Proposition \ref{prop:int} we obtain (2).

 \bigskip
 (3) A column tessellation has horizontal and vertical plates
$$\lambda_\mathsf{E}\mu_\mathsf{EP}=\lambda_{\mathsf{E[hor]}}\mu_{\mathsf{E[hor]}\mathsf{P}}+\lambda_{\mathsf{E[vert]}}\mu_{\mathsf{E[vert]}\mathsf{P}}.$$
Obviously, $\mu_{\mathsf{E[hor]}\mathsf{P}}=3$ and we have $\lambda_{\mathsf{E[hor]}}=\lambda'_\mathsf{V}\bar{\alpha}_\mathsf{V}$ from (v). Each vertical edge corresponding to a vertex $v$ in the planar tessellation is adjacent to $m'_\mathsf{E}(v)$ plates; see Property \ref{property2}. Therefore
$$\lambda_{\mathsf{E[vert]}}\mu_{\mathsf{E[vert]}\mathsf{P}}=\lambda'_\mathsf{V}\mathbb{E}'_\mathsf{V}(\alpha_v m'_\mathsf{E}(v))=\lambda'_\mathsf{V}\bar{\gamma}_\mathsf{V}$$
and hence, with (ii) of Proposition \ref{prop:int},
 $$\mu_\mathsf{EP}=\frac{\lambda'_\mathsf{V}\bar{\alpha}_\mathsf{V}\cdot 3+\lambda'_\mathsf{V}\bar{\gamma}_\mathsf{V}}{2\lambda'_\mathsf{V}\bar{\alpha}_\mathsf{V}}=\frac{1}{2}\frac{\bar{\gamma}_\mathsf{V}}{\bar{\alpha}_\mathsf{V}}+\frac{3}{2}.$$

 \bigskip
 (4) To find the relations for the interior parameters we have to refine the correspondence relations between ${\mathcal Y}'$ and ${\mathcal Y}$ into two cases: whether a vertex of ${\mathcal Y}'$ is a $\pi$-vertex or not. To calculate the intensity of $\pi$-edges $\lambda_{\mathsf{E}[\pi]}$ of $\mathcal{Y}$ we note firstly that all horizontal edges are $\pi$-edges and secondly that a vertical edge is a $\pi$-edge if the corresponding vertex $v\in\mathcal{Y}'$ is a $\pi$-vertex. Hence
 $$\lambda_{\mathsf{E}[\pi]}=\lambda_{\mathsf{E[hor]}}+\lambda'_{\mathsf{V[\pi]}}\mathbb{E}'_{\mathsf{V[\pi]}}(\alpha_{v[\pi]})=\lambda'_\mathsf{V}\bar{\alpha}_\mathsf{V}+\lambda'_{\mathsf{V[\pi]}}\bar{\alpha}_{\mathsf{V[\pi]}},$$
which implies, using $\lambda_{\mathsf{E}[\pi]}= \lambda_\mathsf{E} \xi$ and $\lambda_\mathsf{E}=2\lambda'_\mathsf{V}\bar{\alpha}_\mathsf{V}$,
$$\xi=\frac{\lambda'_\mathsf{V}\bar{\alpha}_\mathsf{V}+\lambda'_{\mathsf{V[\pi]}}\bar{\alpha}_{\mathsf{V[\pi]}}}{2\lambda'_\mathsf{V}\bar{\alpha}_\mathsf{V}}=\frac{1}{2}\phi\frac{\bar{\alpha}_{\mathsf{V[\pi]}}}{\bar{\alpha}_\mathsf{V}}+\frac{1}{2}.$$

\bigskip
(5) To prove the next three relations we again have to refine the corresponding relations between ${\mathcal Y}'$ and ${\mathcal Y}$.  We consider when the vertices of a column tessellation are hemi-vertices or not.
If the vertex $v$ of ${\mathcal Y}'$ is not a $\pi$-vertex, then all $\alpha_v$ corresponding vertices of ${\mathcal Y}$ are not hemi-vertices. If the vertex is a $\pi$-vertex, denoted by $v[\pi]$, then $\beta_{v[\pi]}$ of the corresponding vertices are non-hemi-vertices, the others being hemi-vertices. Hence the intensity of hemi-vertices $\lambda_{\mathsf{V}[\kappa]}$  $=\lambda_{\mathsf{V}} \kappa$  is
\begin{align*}
\lambda_{\mathsf{V}[\kappa]}&=\lambda'_{\mathsf{V[\pi]}}\mathbb{E}'_{\mathsf{V[\pi]}}({\alpha}_{v[\pi]}-\beta_{v[\pi]})\\
&=\lambda'_{\mathsf{V[\pi]}}\bar{\alpha}_{\mathsf{V[\pi]}}-\lambda'_{\mathsf{V[\pi]}}\bar{\beta}_{\mathsf{V[\pi]}}\\
&=\lambda'_{\mathsf{V[\pi]}}\bar{\alpha}_{\mathsf{V[\pi]}}-\lambda'_\mathsf{V}\bar{\alpha}_\mathsf{V}+\lambda'_{\mathsf{Z_0}}\bar{\beta}_{\mathsf{Z_0}}
\end{align*}
using $\lambda'_\mathsf{V[\pi]}\bar{\beta}_\mathsf{V[\pi]}=\lambda'_\mathsf{V}\bar{\alpha}_\mathsf{V}-\lambda'_{\mathsf{Z_0}}\bar{\beta}_{\mathsf{Z_0}}$ from Remark \ref{rem:second-order}.
Therefore with $\lambda'_{\mathsf{Z}_0}=\lambda'_\mathsf{V}(\mu'_\mathsf{VE}-\phi)$ and Proposition \ref{prop:int}, (1)
\begin{align*}
\kappa&=\frac{\lambda'_{\mathsf{V[\pi]}}\bar{\alpha}_{\mathsf{V[\pi]}}-\lambda'_\mathsf{V}\bar{\alpha}_\mathsf{V}+\lambda'_\mathsf{V}(\mu'_\mathsf{VE}-\phi)\bar{\beta}_{\mathsf{Z_0}}}{\lambda'_\mathsf{V}\bar{\alpha}_\mathsf{V}}\\
&=\phi\frac{\bar{\alpha}_{\mathsf{V[\pi]}}}{\bar{\alpha}_\mathsf{V}}+(\mu'_\mathsf{VE}-\phi)\frac{\bar{\beta}_{\mathsf{Z_0}}}{\bar{\alpha}_\mathsf{V}}-1.
\end{align*}

\bigskip
(6) To present the parameter $\psi$, we have to find out the number of ridge-interiors adjacent to a vertex in different cases.
If the vertex $v$ of ${\mathcal Y}'$ is not a $\pi$-vertex, denoted by $v[\bar{\pi}]$, then each of the $\alpha_{v[\bar{\pi}]}$ corresponding vertices of ${\mathcal Y}$ is adjacent to $m'_\mathsf{E}(v[\bar{\pi}])-1$ ridge-interiors. If $v$ of ${\mathcal Y}'$ is a $\pi$-vertex $v[\pi]$, each of the corresponding non-hemi-vertices of ${\mathcal Y}$ is adjacent to $m'_{\mathsf{E}}(v[\pi])+1$ ridge-interiors, and each of the remaining corresponding hemi-vertices is adjacent to $m'_{\mathsf{E}}(v[\pi])-2$ ridge-interiors. Hence
\begin{align*}
\lambda_\mathsf{V}\psi&=\lambda'_\mathsf{V[\bar{\pi}]}\mathbb{E'}_\mathsf{V[\bar{\pi}]}[\alpha_{v[\bar{\pi}]}(m'_\mathsf{E}(v[\bar{\pi}])-1)] +
\lambda'_\mathsf{V[\pi]}\mathbb{E'}_\mathsf{V[\pi]}[\beta_{v[\pi]}(m'_\mathsf{E}(v[\pi])+1)]\\
& \ \ \ +
\lambda'_\mathsf{V[\pi]}\mathbb{E'}_\mathsf{V[\pi]}[(\alpha_{v[\pi]}-\beta_{v[\pi]})(m'_\mathsf{E}(v[\pi])-2)] \\
&=\lambda'_\mathsf{V}\mathbb{E'}_\mathsf{V}(\alpha_v m'_\mathsf{E}(v))-2\lambda'_\mathsf{V}\mathbb{E'}_\mathsf{V}(\alpha_v)+\lambda'_\mathsf{V[\bar{\pi}]}\mathbb{E'}_\mathsf{V[\bar{\pi}]}(\alpha_{v[\bar{\pi}]})+3\lambda'_\mathsf{V[\pi]}\mathbb{E'}_\mathsf{V[\pi]}(\beta_{v[\pi]})\\
&=\lambda'_\mathsf{V}\bar{\gamma}_\mathsf{V}-2\lambda'_\mathsf{V}\bar{\alpha}_\mathsf{V}+\lambda'_\mathsf{V}\bar{\alpha}_\mathsf{V}-\lambda'_{\mathsf{V[\pi]}}\bar{\alpha}_{\mathsf{V[\pi]}}+3\lambda'_\mathsf{V}\bar{\alpha}_\mathsf{V}-3\lambda'_{\mathsf{Z_0}}\bar{\beta}_{\mathsf{Z_0}}\\
&=\lambda'_\mathsf{V}\bar{\gamma}_\mathsf{V}-\lambda'_{\mathsf{V[\pi]}}\bar{\alpha}_{\mathsf{V[\pi]}}-3\lambda'_{\mathsf{Z_0}}\bar{\beta}_{\mathsf{Z_0}}+2\lambda'_\mathsf{V}\bar{\alpha}_\mathsf{V}.
\end{align*}
Therefore,
\begin{align*}
\psi&=\frac{\lambda'_\mathsf{V}\bar{\gamma}_\mathsf{V}-\lambda'_{\mathsf{V[\pi]}}\bar{\alpha}_{\mathsf{V[\pi]}}-3\lambda'_\mathsf{V}(\mu'_\mathsf{VE}-\phi)\bar{\beta}_{\mathsf{Z_0}}+2\lambda'_\mathsf{V}\bar{\alpha}_\mathsf{V}}{\lambda'_\mathsf{V}\bar{\alpha}_\mathsf{V}}\\
&=\frac{\bar{\gamma}_\mathsf{V}}{\bar{\alpha}_\mathsf{V}}-\phi\frac{\bar{\alpha}_{\mathsf{V[\pi]}}}{\bar{\alpha}_\mathsf{V}}-3(\mu'_\mathsf{VE}-\phi)\frac{\bar{\beta}_{\mathsf{Z_0}}}{\bar{\alpha}_\mathsf{V}}+2.
\end{align*}

\bigskip
(7) Now for the last identity we consider how the number of plate-side-interiors adjacent to a vertex of ${\mathcal Y}$ depends on the type of the corresponding vertex of ${\mathcal Y}'$: If the vertex $v$ of ${\mathcal Y}'$ is a $v[\bar{\pi}]$, then each of the corresponding $\alpha_{v[\bar{\pi}]}$ vertices of ${\mathcal Y}$ is adjacent to $m'_\mathsf{E}(v[\bar{\pi}])-2$ plate-side-interiors. If $v$ is a $v[\pi]$, each of the corresponding non-hemi-vertices is adjacent to $m'_{\mathsf{E}}(v[\pi])-1$ plate-side-interiors, and each of the other corresponding hemi-vertices is adjacent to $m'_{\mathsf{E}}(v[\pi])-2$ plate-side-interiors. Hence
\begin{align*}
\lambda_\mathsf{V}\tau&=\lambda'_\mathsf{V[\bar{\pi}]}\mathbb{E'}_\mathsf{V[\bar{\pi}]}[\alpha_{v[\bar{\pi}]}(m'_\mathsf{E}(v[\bar{\pi}])-2)] +
\lambda'_\mathsf{V[\pi]}\mathbb{E'}_\mathsf{V[\pi]}[\beta_{v[\pi]}(m'_\mathsf{E}(v[\pi])-1)]\\
& \ \ \ +
\lambda'_\mathsf{V[\pi]}\mathbb{E'}_\mathsf{V[\pi]}[(\alpha_{v[\pi]}-\beta_{v[\pi]})(m'_\mathsf{E}(v[\pi])-2)] \\
&=\lambda'_\mathsf{V}\mathbb{E'}_\mathsf{V}(\alpha_vm'_\mathsf{E}(v))-2\lambda'_\mathsf{V}\mathbb{E'}_\mathsf{V}(\alpha_v)+\lambda'_\mathsf{V[\pi]}\mathbb{E'}_\mathsf{V[\pi]}(\beta_{v[\pi]})\\
&=\lambda'_\mathsf{V}\bar{\gamma}_\mathsf{V}-2\lambda'_\mathsf{V}\bar{\alpha}_\mathsf{V}+\lambda'_\mathsf{V}\bar{\alpha}_\mathsf{V}-\lambda'_{\mathsf{Z_0}}\bar{\beta}_{\mathsf{Z_0}}\\
&=\lambda'_\mathsf{V}\bar{\gamma}_\mathsf{V}-\lambda'_\mathsf{V}\bar{\alpha}_\mathsf{V}-\lambda'_{\mathsf{Z_0}}\bar{\beta}_{\mathsf{Z_0}}.
\end{align*}
Therefore
\begin{align*}
\tau&=\frac{\lambda'_\mathsf{V}\bar{\gamma}_\mathsf{V}-\lambda'_\mathsf{V}(\mu'_\mathsf{VE}-\phi)\bar{\beta}_{\mathsf{Z_0}}-\lambda'_\mathsf{V}\bar{\alpha}_\mathsf{V}}{\lambda'_\mathsf{V}\bar{\alpha}_\mathsf{V}}\\
&=\frac{\bar{\gamma}_\mathsf{V}}{\bar{\alpha}_\mathsf{V}}-(\mu'_\mathsf{VE}-\phi)\frac{\bar{\beta}_{\mathsf{Z_0}}}{\bar{\alpha}_\mathsf{V}}-1.
\end{align*}
\end{proof}

Using mean value relations in \cite{WC}, further topological parameters can be computed. For example the mean number of vertices and edges, respectively, of the typical cell are
$$ \mu_\mathsf{ZV} = 2\frac{\bar{\gamma}_\mathsf{V}+\bar{\alpha}_\mathsf{V}}{(\mu'_\mathsf{VE}-2)\bar{\rho}_\mathsf{Z}} \quad {\rm and}  \quad \mu_\mathsf{ZE} =  2\frac{\bar{\gamma}_\mathsf{V}+3\bar{\alpha}_\mathsf{V}}{(\mu'_\mathsf{VE}-2)\bar{\rho}_\mathsf{Z}} , $$
whereas the mean number of $0$-faces and $1$-faces of the typical cell are
$$\nu_0(\mathsf{Z})=4 \bar{\beta}_{\mathsf{Z_0}} \frac{\mu'_\mathsf{VE}-\phi}{(\mu'_\mathsf{VE}-2)\bar{\rho}_\mathsf{Z}}\quad {\rm and}  \quad
\nu_1(\mathsf{Z})=6 \bar{\beta}_{\mathsf{Z_0}} \frac{\mu'_\mathsf{VE}-\phi}{(\mu'_\mathsf{VE}-2)\bar{\rho}_\mathsf{Z}}.$$

\begin{rem} \label{rem:int1-top-mv}
To calculate the intensities and topological parameters of a column tessellation with height 1 from the planar tessellation, five parameters are needed,
$$ \lambda'_{\mathsf{V}}, \quad \mu'_{\mathsf{VE}}, \quad \phi, \quad \mu'_{\mathsf{EV}[\pi]} \quad \mathrm{and} \quad \mu'^{(2)}_{\mathsf{VE}}. $$
Using Remark \ref{rem:intensity1}, Proposition \ref{prop:int} and Theorem \ref{thm:adja} and the mean value relation $\mu'_{\mathsf{V[\pi]E}}= \displaystyle{\frac{\mu'_{\mathsf{VE}}}{2 \phi}\mu'_{\mathsf{EV}[\pi]}}$  the intensities of a column tessellation with height 1 are
 $$\lambda_{\mathsf{V}}=\lambda'_{\mathsf{V}}\mu'_{\mathsf{VE}}, \quad
 \lambda_{\mathsf{E}}=2\lambda'_{\mathsf{V }}\mu'_{\mathsf{VE}}, \quad
 \lambda_{\mathsf{P}}=\frac{1}{2}\lambda'_{\mathsf{V}}(3\mu'_{\mathsf{VE}}-2), \quad
 \lambda_{\mathsf{Z}}=\frac{1}{2}\lambda'_{\mathsf{V}}(\mu'_{\mathsf{VE}}-2), $$
the cyclic adjacency parameters are
$$
\mu_{\mathsf{VE}}=4, \quad
\mu_{\mathsf{PV}}=\frac{2}{3\mu'_{\mathsf{VE}}-2}(3\mu'_{\mathsf{VE}}+\mu'^{(2)}_{\mathsf{VE}}), \quad
\mu_{\mathsf{EP}}=\frac{1}{2\mu'_{\mathsf{VE}}}(3\mu'_{\mathsf{VE}}+\mu'^{(2)}_{\mathsf{VE}}), $$
and for the interior parameters we obtain
\begin{alignat*}{2} \xi=&\ \frac{1}{2}+\frac{1}{4}\mu'_{\mathsf{EV}[\pi]},& \quad
\kappa=&\ \frac{1}{2}\mu'_{\mathsf{EV}[\pi]}-\frac{\phi}{\mu'_{\mathsf{VE}}}, \\
\psi=&\ \frac{\mu'^{(2)}_{\mathsf{VE}}+3\phi}{\mu'_{\mathsf{VE}}}-1-\frac{1}{2}\mu'_{\mathsf{EV}[\pi]}, \quad&
\tau=&\ \frac{\mu'^{(2)}_{\mathsf{VE}}+\phi}{\mu'_{\mathsf{VE}}}-2.
\end{alignat*}
\end{rem}

\begin{rem} In \cite{CW} constraints on the topological parameters of spatial tessellations are considered.
Because the second moment $\mu'^ {(2)}_\mathsf{VE}$ of a planar tessellation is unbounded, see \cite{CW}, in the class of column tessellations with height 1 the mean values $\mu_\mathsf{EP}, \ \mu_\mathsf{PV}$ and $ \tau, \ \psi$ are unbounded. Further constraints are as follows.
\begin{prop}
\label{prop:constraints}
The constraints for the topological mean values of a column tessellation ${\mathcal Y}$ with height 1 depending on $\mu'_{\mathsf{VE}}$ and $\phi$ of  ${\mathcal Y}'$ are as follows\\[2mm]
$$\begin{array}{rcccccccl}
\frac{36}{7} & \leq & \displaystyle{\frac{2\mu'_{\mathsf{VE}}(3+\mu'_{\mathsf{VE}})}{3\mu'_{\mathsf{VE}}-2}} & \leq & \mu_{\mathsf{PV}},  \\[3mm]
3 &\leq& \frac{1}{2} (3 + \mu'_{\mathsf{VE}}) &\leq & \mu_{\mathsf{EP}},\\[3mm]
\frac{1}{2} &\leq& \displaystyle{\frac{1}{2} + \frac{3}{2} \frac{\phi}{\mu'_{\mathsf{VE}}}} &\leq & \xi & \leq  &
\displaystyle{1 - \frac{3(1- \phi)}{2 \mu'_{\mathsf{VE}}}} & \leq & 1,\\[3mm]
0 &\leq & \displaystyle{\frac{2 \phi}{\mu'_{\mathsf{VE}}}} &\leq & \kappa & \leq  & \displaystyle{ 1 - \frac{3 - 2 \phi}{\mu'_{\mathsf{VE}}}}& \leq & \frac{3}{4},
 \\[3mm]
2 & \leq & \displaystyle{\mu'_{\mathsf{VE}} + \frac{3}{\mu'_{\mathsf{VE}}} - 2}& \leq & \psi, \\[3mm]
1 &\leq& \displaystyle{\mu'_{\mathsf{VE}} + \frac{\phi}{\mu'_{\mathsf{VE}}}-2} &\leq& \tau.
\end{array}$$
\end{prop}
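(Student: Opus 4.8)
The plan is to substitute the closed forms for the height-$1$ column tessellation recorded in Remark~\ref{rem:int1-top-mv} into each row of the display and reduce every two-sided inequality to a statement purely about the four planar quantities $\mu'_\mathsf{VE}$, $\phi$, $\mu'^{(2)}_\mathsf{VE}$ and $\mu'_{\mathsf{EV}[\pi]}$. Each middle expression in the proposition is exactly the image of a single planar extremal relation, so the whole statement rests on isolating four elementary planar facts and then doing some one-variable calculus. The planar ingredients I would establish first are: (a) every vertex, $\pi$-vertex and cell-corner has degree at least $3$, whence $\mu'_\mathsf{VE}\geq 3$; (b) Jensen's inequality, giving $\mu'^{(2)}_\mathsf{VE}\geq(\mu'_\mathsf{VE})^2$; (c) an upper bound $\mu'_\mathsf{VE}\leq 6-2\phi$; and (d) the two-sided bound $\tfrac{6\phi}{\mu'_\mathsf{VE}}\leq\mu'_{\mathsf{EV}[\pi]}\leq 2-\tfrac{6(1-\phi)}{\mu'_\mathsf{VE}}$.

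For (c) I would combine the planar Euler relation $\lambda'_\mathsf{V}-\lambda'_\mathsf{E}+\lambda'_\mathsf{Z}=0$ (which gives $\mu'_\mathsf{VE}=2+2\lambda'_\mathsf{Z}/\lambda'_\mathsf{V}$) with the identity $\lambda'_{\mathsf{Z_0}}=\lambda'_\mathsf{V}(\mu'_\mathsf{VE}-\phi)$ already used in the proof of Theorem~\ref{thm:adja} and the fact that the typical cell has at least three $0$-faces, i.e. $\lambda'_{\mathsf{Z_0}}/\lambda'_\mathsf{Z}\geq 3$; eliminating $\lambda'_\mathsf{Z}/\lambda'_\mathsf{V}$ yields $\mu'_\mathsf{VE}\leq 6-2\phi$. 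For (d) I would use the mass-balance identity $\mu'_{\mathsf{V[\pi]E}}=\tfrac{\mu'_\mathsf{VE}}{2\phi}\mu'_{\mathsf{EV}[\pi]}$ quoted in Remark~\ref{rem:int1-top-mv} together with $\mu'_{\mathsf{V[\pi]E}}\geq 3$ for the lower half, and the analogous corner balance $\mu'_{\mathsf{EV}[\bar\pi]}=\tfrac{2(1-\phi)}{\mu'_\mathsf{VE}}\mu'_{\mathsf{V[\bar\pi]E}}$ with $\mu'_{\mathsf{V[\bar\pi]E}}\geq 3$ and $\mu'_{\mathsf{EV}[\pi]}=2-\mu'_{\mathsf{EV}[\bar\pi]}$ for the upper half.

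With these in hand the six rows fall out mechanically. The two purely degree-based rows, $\mu_\mathsf{EP}$ and $\tau$, use only (a) and (b): substituting the extremal value $\mu'^{(2)}_\mathsf{VE}=(\mu'_\mathsf{VE})^2$ produces the middle terms $\tfrac12(3+\mu'_\mathsf{VE})$ and $\mu'_\mathsf{VE}+\tfrac{\phi}{\mu'_\mathsf{VE}}-2$, and $\mu'_\mathsf{VE}\geq 3$ gives the outer bounds $3$ and $1$. For $\mu_\mathsf{PV}$ and $\psi$ the middle terms again come from Jensen, and the outer bounds $\tfrac{36}{7}$ and $2$ follow by checking that $f(x)=\tfrac{2x(3+x)}{3x-2}$ and $g(x)=x+\tfrac{3}{x}-2$ are increasing on $[3,\infty)$ (their derivatives are positive there), so both are minimized at $x=3$. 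Note that $\psi$ needs (b) and the upper half of (d) together: after subtracting the target one is left with a nonnegative variance term plus the quantity $1-\tfrac{3(1-\phi)}{\mu'_\mathsf{VE}}-\tfrac12\mu'_{\mathsf{EV}[\pi]}$, which is nonnegative precisely by (d). The $\xi$ and $\kappa$ rows are linear in $\mu'_{\mathsf{EV}[\pi]}$, so their inner bounds are just the two halves of (d) rewritten, their trivial outer bounds ($\tfrac12$, $0$, $1$) come from $0\leq\phi\leq 1$, and the remaining $\kappa\leq\tfrac34$ is where (c) enters: since $1-\tfrac{3-2\phi}{\mu'_\mathsf{VE}}$ increases in $\mu'_\mathsf{VE}$, plugging in $\mu'_\mathsf{VE}\leq 6-2\phi$ gives $\tfrac{3}{6-2\phi}$, which is $\leq\tfrac34$ exactly when $\phi\leq 1$.

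I expect the main obstacle to be ingredient (d), and to a lesser extent (c): these require careful incidence counting that distinguishes $\pi$-vertices from genuine corners (a $\pi$-vertex of degree $d$ is a $0$-face of only $d-1$ cells, while a corner of degree $d$ is a $0$-face of all $d$), together with a clean justification of the minimal-degree facts in the non-side-to-side setting. Everything after the planar bounds are in place is routine substitution and single-variable monotonicity. I would also record that each middle column is attained in the limiting regular regime $\mu'^{(2)}_\mathsf{VE}=(\mu'_\mathsf{VE})^2$ with minimal degrees, which both explains the sharpness of the bounds and serves as a consistency check on the algebra.
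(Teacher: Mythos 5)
Your proposal is correct and follows essentially the same route as the paper's proof: establish the planar bounds $0\le\phi\le 1$, $3\le\mu'_{\mathsf{VE}}\le 6-2\phi$ and $\frac{6\phi}{\mu'_{\mathsf{VE}}}\le\mu'_{\mathsf{EV}[\pi]}\le 2-\frac{6(1-\phi)}{\mu'_{\mathsf{VE}}}$ (the latter via the same mass-balance identities and the minimal degree $3$ of $\pi$- and non-$\pi$-vertices), then substitute into the height-$1$ formulas of Remark \ref{rem:int1-top-mv}. The only differences are expository rather than structural: you derive $\mu'_{\mathsf{VE}}\le 6-2\phi$ from Euler's relation and corner counting where the paper simply cites \cite{WC}, you phrase the $\mu'_{\mathsf{EV}[\pi]}$ bounds through the edge-side decomposition $\mu'_{\mathsf{EV}[\pi]}+\mu'_{\mathsf{EV}[\bar\pi]}=2$ instead of the equivalent vertex-side decomposition $\mu'_{\mathsf{VE}}=\phi\mu'_{\mathsf{V}[\pi]\mathsf{E}}+(1-\phi)\mu'_{\mathsf{V}[\bar\pi]\mathsf{E}}$, and you make explicit the Jensen inequality $\mu'^{(2)}_{\mathsf{VE}}\ge(\mu'_{\mathsf{VE}})^2$ and the monotonicity checks on $[3,\infty)$ that the paper leaves implicit in ``applying these results to Remark \ref{rem:int1-top-mv}''.
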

\begin{proof}
For any planar tessellation we have
\begin{equation*}
 0 \leq \phi \leq 1 \quad {\rm and} \quad 3 \leq \mu'_{\mathsf{VE}} \leq 6 - 2 \phi,
 \end{equation*}
as shown in \cite{WC}.
Furthermore it is evident that $3 \leq \mu'_{\mathsf{V}[\pi]\mathsf{E}}$ and $3 \leq \mu'_{\mathsf{V}[\bar{\pi}]\mathsf{E}}$. With $\mu'_{\mathsf{VE}}=\phi\mu'_{\mathsf{V}[\pi]\mathsf{E}}+(1-\phi)\mu'_{\mathsf{V}[\bar{\pi}]\mathsf{E}}$ we obtain the following constraints for the mean number of emanating edges of the typical $\pi$-vertex
$$ 3 \leq  \mu'_{\mathsf{V}[\pi]\mathsf{E}} \leq  \displaystyle{\frac{\mu'_{\mathsf{VE}}}{\phi } - \frac{3(1-  \phi)}{\phi}}.$$
Hence the constraints for ${\mu'_{\mathsf{EV}[\pi]}}$ are
\begin{equation*}
\displaystyle{\frac{6 \phi}{\mu'_{\mathsf{VE}}}} \leq \mu'_{\mathsf{EV[\pi]}} \leq 2 - \displaystyle{\frac{6(1-\phi)}{\mu'_{\mathsf{VE}}}}
\end{equation*}
using $ \mu'_{\mathsf{EV}[\pi]} = \frac{2 \phi}{\mu'_{\mathsf{VE}}} \mu'_{\mathsf{V}[\pi]\mathsf{E}} $.

Applying these results to Remark \ref{rem:int1-top-mv} leads to the constraints for column tessellations with height 1.
\end{proof}
\end{rem}
\subsection{Metric mean values of column tessellations}
\subsubsection{Notations and mean values corresponding to the length measure}

Firstly we consider mean values corresponding to the length measure for the object classes  $\mathsf{X}\in\{\mathsf{E},\mathsf{P},\mathsf{Z}\}$  in $\mathcal Y$, those denoted by\\[2mm]
\begin{tabular}{lcl}
$\bar{\ell}_\mathsf{X}$ & -- & the mean total length of all $1$-faces of the typical $\mathsf{X}$-object, where\\&&$\dim$($\mathsf{X}$-object)$\geq 1$.
\end{tabular}\\[2mm]
This means for special object classes\\[2mm]
\hspace*{5mm}
\begin{tabular}{lcl}
$\bar{\ell}_\mathsf{E}$ & -- & the mean length of the typical edge,\\
$\bar{\ell}_\mathsf{P}$ & -- & the mean perimeter of the typical plate and\\
$\bar{\ell}_\mathsf{Z}$ & -- & the mean total length of all ridges of the typical cell.
\end{tabular}\\[2mm]
We can also define $\bar{\ell}_{\mathsf{X_k}}$ and $\bar{\ell}_{\mathsf{X[.]}}$ in a similar way. For example\\[2mm]
\hspace*{5mm}
\begin{tabular}{lcl}
$\bar{\ell}_{\mathsf{E[hor]}}$, $\bar{\ell}_{\mathsf{E[vert]}}$ and $\bar{\ell}_{\mathsf{E[\pi]}}$ & -- & the mean length of the typical horizontal edge, \\&&the typical vertical edge and
 the typical $\pi$-edge\\&& in that order and
\end{tabular}\\
\hspace*{5mm}
\begin{tabular}{lcl}
$\bar{\ell}_{\mathsf{P_1}}$, $\bar{\ell}_{\mathsf{Z_1}}$ & -- & the mean length of the typical plate-side and the typical ridge,  \\ &&respectively, \\
$\bar{\ell}_{\mathsf{Z_2}}$ & -- & the mean perimeter  of the typical facet.
\end{tabular}\\[2mm]
These notations do not include for instance the mean total length of all edges of the typical cell. Therefore we use again the adjacency concept, analog to the topological mean values $\mu_{\mathsf{XY}}$:  \\[2mm]
\hspace*{5mm}
\begin{tabular}{lcl}
$\bar{\ell}_{\mathsf{XY}}$ & -- & the mean total length of all $\mathsf{Y}$-objects adjacent to the typical \\&&$\mathsf{X}$-object, where $\dim$($\mathsf{Y}$-object)$=1$.
\end{tabular}\\[2mm]
For $\mathsf{X} = \mathsf{Z}$ and $\mathsf{Y} = \mathsf{E}$ we have\\[2mm]
\hspace*{5mm}
\begin{tabular}{lcl}
$\bar{\ell}_\mathsf{ZE}$ & -- & the mean total length of all edges adjacent to the typical cell.
\end{tabular}

Some of these $\bar{\ell}_{\mathsf{XY}}$ mean values can be easily  determined, for example
$$ \bar{\ell}_\mathsf{PE}=\bar{\ell}_\mathsf{P},\quad
\bar{\ell}_\mathsf{Z_1E}=\bar{\ell}_\mathsf{Z_1}, \quad
\bar{\ell}_\mathsf{P_1E}=\bar{\ell}_\mathsf{P_1}, $$
but other examples (see Proposition \ref{prop:lemeva2}) are more complicated and demonstrate the necessity of the notation.

Using the parameter $\bar{\gamma}_\mathsf{E}$ of the planar tessellation $\mathcal{Y}'$ - the length-weighted total $\rho$-intensity of the cells adjacent to the typical edge, we can calculate the mean values corresponding to the length measure of a column tessellation.

\begin{thm}
\label{thm:lemeva}
Three mean values of primitive elements corresponding to the length measure of the column tessellation are given as follows:
\begin{align}
\bar{\ell}_\mathsf{E}&=\frac{1}{2}\bigg(\frac{\bar{\gamma}_\mathsf{E}}{\bar{\alpha}_\mathsf{E}}+\frac{1}{\bar{\alpha}_\mathsf{V}}\bigg);\\
\bar{\ell}_\mathsf{P}&=\frac{(3\bar{\gamma}_\mathsf{E}+2)\mu'_\mathsf{VE}}{(\mu'_\mathsf{VE}-2)\bar{\rho}_\mathsf{Z}+\mu'_\mathsf{VE}\bar{\alpha}_\mathsf{E}};\\
\bar{\ell}_\mathsf{Z}&=\frac{2(\mu'_\mathsf{VE}\bar{\gamma}_\mathsf{E}+\mu'_\mathsf{VE}-\phi)}{(\mu'_\mathsf{VE}-2)\bar{\rho}_\mathsf{Z}}.
\end{align}
\end{thm}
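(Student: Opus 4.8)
The plan is to treat all three formulas by one common recipe: split the relevant class of $\mathcal{Y}$ into its horizontal and vertical subclasses, compute for each subclass its \emph{length density} (total length of $1$-faces per unit volume) directly from the stacking construction, and then divide by the object intensities supplied by Proposition \ref{prop:int}. Throughout I would convert the planar length densities into the marked second-order quantities of Remark \ref{rem:second-order}, using in particular $\lambda'_\mathsf{E}\bar{\gamma}_\mathsf{E}=\lambda'_\mathsf{Z}\mathbb{E}'_\mathsf{Z}(\ell'(z)\rho_z)$ and the identity $\lambda'_\mathsf{V}\bar{\alpha}_\mathsf{V}=\lambda'_\mathsf{E}\bar{\alpha}_\mathsf{E}$ established in the proof of Proposition \ref{prop:int}.

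For $\bar{\ell}_\mathsf{E}$ I would use that a horizontal edge is a vertical translate of a planar edge $e$ (Property \ref{property2}), stacked along the common wall of its two adjacent columns with vertical density $\alpha_e$; the horizontal-edge length density is then $\lambda'_\mathsf{E}\mathbb{E}'_\mathsf{E}(\ell'(e)\alpha_e)=\lambda'_\mathsf{E}\bar{\gamma}_\mathsf{E}$, and dividing by $\lambda_\mathsf{E[hor]}=\lambda'_\mathsf{E}\bar{\alpha}_\mathsf{E}$ gives $\bar{\ell}_\mathsf{E[hor]}=\bar{\gamma}_\mathsf{E}/\bar{\alpha}_\mathsf{E}$. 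A vertical edge lies on a line $\mathcal{L}_v$ which is entirely partitioned by the point process of intensity $\alpha_v$, so its length density is simply $\lambda'_\mathsf{V}$ and $\bar{\ell}_\mathsf{E[vert]}=1/\bar{\alpha}_\mathsf{V}$. Since $\lambda_\mathsf{E[hor]}=\lambda_\mathsf{E[vert]}$, averaging the two gives the claimed value of $\bar{\ell}_\mathsf{E}$.

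For $\bar{\ell}_\mathsf{P}$ and $\bar{\ell}_\mathsf{Z}$ the same scheme applies, but the decisive point --- and the step I expect to be the main obstacle --- is to keep apart the two kinds of contribution to a perimeter. A vertical plate above an edge $e$ is a rectangle of width $\ell'(e)$ and height $h$, arising between consecutive cuts of the superimposed cut processes of the two adjacent columns (vertical density $\alpha_e$); its perimeter is $2\ell'(e)+2h$. Summing over one wall up to height $H$, the two horizontal sides contribute $2\ell'(e)\cdot(\alpha_e H)$, which scales with $\alpha_e$ and produces $\bar{\gamma}_\mathsf{E}$, whereas the two vertical sides telescope, $\sum 2h = 2H$, giving a term \emph{independent} of the cut density. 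This yields a vertical-plate perimeter density $\lambda'_\mathsf{E}(2\bar{\gamma}_\mathsf{E}+2)$; combined with the horizontal-plate perimeter density $\lambda'_\mathsf{E}\bar{\gamma}_\mathsf{E}$ (planar cells of perimeter $\ell'(z)$ stacked with density $\rho_z$) and the intensity $\lambda_\mathsf{P}$ of Proposition \ref{prop:int}(iii), this gives $\bar{\ell}_\mathsf{P}=\lambda'_\mathsf{E}(3\bar{\gamma}_\mathsf{E}+2)/(\lambda'_\mathsf{V}\bar{\alpha}_\mathsf{V}+\lambda'_\mathsf{Z}\bar{\rho}_\mathsf{Z})$. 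Entirely analogously, a cell is a prism $z\times[0,h]$ whose ridges have total length $2\ell'(z)+n_0(z)\,h$; the base-and-top perimeters scale with $\rho_z$ to give $2\lambda'_\mathsf{E}\bar{\gamma}_\mathsf{E}$, while the $n_0(z)$ vertical ridges telescope over the column to the corner density $\lambda'_{\mathsf{Z}_0}=\lambda'_\mathsf{V}(\mu'_\mathsf{VE}-\phi)$, so the ridge-length density is $2\lambda'_\mathsf{E}\bar{\gamma}_\mathsf{E}+\lambda'_\mathsf{V}(\mu'_\mathsf{VE}-\phi)$.

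Finally I would clear the planar intensities using two elementary planar identities: $\mu'_\mathsf{EV}=2$, i.e. $\lambda'_\mathsf{E}=\tfrac{1}{2}\lambda'_\mathsf{V}\mu'_\mathsf{VE}$, and the Euler relation $\lambda'_\mathsf{V}-\lambda'_\mathsf{E}+\lambda'_\mathsf{Z}=0$, i.e. $\lambda'_\mathsf{Z}=\tfrac{1}{2}\lambda'_\mathsf{V}(\mu'_\mathsf{VE}-2)$, together with $\bar{\alpha}_\mathsf{V}=\tfrac{1}{2}\mu'_\mathsf{VE}\bar{\alpha}_\mathsf{E}$. Substituting these into the two quotients above and cancelling the common factor $\tfrac{1}{2}\lambda'_\mathsf{V}$ turns them into exactly the claimed expressions for $\bar{\ell}_\mathsf{P}$ and $\bar{\ell}_\mathsf{Z}$. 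The geometry is otherwise straightforward; the only place requiring real care is the clean separation, in each perimeter count, of the terms that scale with the vertical cut intensity (yielding $\bar{\gamma}_\mathsf{E}$) from the telescoping terms that do not (yielding the constant $+2$ and the corner count $\mu'_\mathsf{VE}-\phi$).
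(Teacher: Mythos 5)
Your proposal is correct and follows essentially the same route as the paper: decompose $\mathsf{E}$, $\mathsf{P}$, $\mathsf{Z}$ into horizontal and vertical contributions, compute the length densities $\lambda'_\mathsf{E}\bar{\gamma}_\mathsf{E}+\lambda'_\mathsf{V}$, $3\lambda'_\mathsf{E}\bar{\gamma}_\mathsf{E}+2\lambda'_\mathsf{E}$ and $2\lambda'_\mathsf{E}\bar{\gamma}_\mathsf{E}+\lambda'_{\mathsf{Z}_0}$, and divide by the intensities of Proposition \ref{prop:int}. Your telescoping arguments for the vertical plate-sides and vertical ridges simply make explicit what the paper asserts with ``it is not difficult to see,'' and your clearing of planar intensities via $\lambda'_\mathsf{E}=\tfrac12\lambda'_\mathsf{V}\mu'_\mathsf{VE}$, the planar Euler relation and $\lambda'_\mathsf{E}\bar{\alpha}_\mathsf{E}=\lambda'_\mathsf{V}\bar{\alpha}_\mathsf{V}$ matches the paper's final substitutions.
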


\begin{proof}
 (8) Recalling that a column tessellation has only horizontal and vertical edges
$$\lambda_\mathsf{E}\bar{\ell}_\mathsf{E}=\lambda_{\mathsf{E[hor]}}\bar{\ell}_{\mathsf{E[hor]}}+\lambda_{\mathsf{E[vert]}}\bar{\ell}_{\mathsf{E[vert]}}=\lambda'_\mathsf{E}\bar{\gamma}_\mathsf{E}+\lambda'_\mathsf{V}.$$
With (ii) from Proposition \ref{prop:int} and the relation $\lambda'_\mathsf{E}\bar{\alpha}_\mathsf{E}=\lambda'_\mathsf{V}\bar{\alpha}_\mathsf{V}$ we obtain (8).

 (9) Similarly, for the plates of $\mathcal{Y}$ we have
$$\lambda_\mathsf{P}\bar{\ell}_\mathsf{P}=\lambda_{\mathsf{P[hor]}}\bar{\ell}_{\mathsf{P[hor]}}+\lambda_{\mathsf{P[vert]}}\bar{\ell}_{\mathsf{P[vert]}}.$$
It is not difficult to see that $\lambda_{\mathsf{P[hor]}}\bar{\ell}_{\mathsf{P[hor]}}=\lambda'_\mathsf{Z}\mathbb{E}'_\mathsf{Z}(\ell'(z)\rho_z)=\lambda'_\mathsf{E}\bar{\gamma}_\mathsf{E}$ and $\lambda_{\mathsf{P[vert]}}\bar{\ell}_{\mathsf{P[vert]}}=2\lambda_{\mathsf{E[hor]}}\bar{\ell}_{\mathsf{E[hor]}}+2\lambda'_\mathsf{E}=2\lambda'_\mathsf{E}\bar{\gamma}_\mathsf{E}+2\lambda'_\mathsf{E}$, then
$$\lambda_\mathsf{P}\bar{\ell}_\mathsf{P}=3\lambda'_\mathsf{E}\bar{\gamma}_\mathsf{E}+2\lambda'_\mathsf{E},$$
which implies (9).

 (10) To determine the mean total length of the typical ridge we use the following
$$\lambda_\mathsf{Z}\bar{\ell}_\mathsf{Z}=2\lambda_{\mathsf{P[hor]}}\bar{\ell}_{\mathsf{P[hor]}}+\lambda'_\mathsf{Z_0},$$
and we get (10).
\end{proof}

Other mean values corresponding to the length measure of the column tessellation can be computed in the same way by separating the roles of horizontal objects and vertical objects. For example,
\begin{align*}
&\text{the mean length of the typical $\pi$-edge }\bar{\ell}_{\mathsf{E[\pi]}}=\frac{\mu'_\mathsf{VE}\bar{\gamma}_\mathsf{E}+2\phi}{2(\bar{\alpha}_\mathsf{V}+\phi\bar{\alpha}_\mathsf{V[\pi]})},\\
&\text{the mean length of the typical ridge } \bar{\ell}_\mathsf{Z_1}=\frac{1}{3\bar{\beta}_\mathsf{Z_0}}\bigg(\frac{\mu'_\mathsf{VE}\bar{\gamma}_\mathsf{E}}{\mu'_\mathsf{VE}-\phi}+1\bigg),\\
&\text{the mean length of the typical plate-side } \bar{\ell}_\mathsf{P_1}=\frac{\mu'_\mathsf{VE}(3\bar{\gamma}_\mathsf{E}+2)}{2(\mu'_\mathsf{VE}-\phi)+4\mu'_\mathsf{VE}\bar{\alpha}_\mathsf{E}},\\
&\text{the mean perimeter of the typical facet } \bar{\ell}_\mathsf{Z_2}=\frac{2\mu'_\mathsf{VE}\bar{\gamma}_\mathsf{E}+2(\mu'_\mathsf{VE}-\phi)}{(\mu'_\mathsf{VE}-2)\bar{\rho}_\mathsf{Z}+(\mu'_\mathsf{VE}-\phi)\bar{\beta}_\mathsf{Z_0}}.
\end{align*}

We also take care of results for some $\bar{\ell}_{\mathsf{XY}}$. It is interesting for us to calculate\\[2mm]
\hspace*{5mm}
\begin{tabular}{lcl}
$\bar{\ell}_\mathsf{ZE}$ & -- & the mean total length of all edges adjacent to the typical cell and\\
$\bar{\ell}_\mathsf{Z_2E}$ & -- & the mean total length of all edges adjacent to the typical facet.
\end{tabular}\\[2mm]

\begin{prop}
\label{prop:lemeva2}
The values of $\bar{\ell}_\mathsf{ZE}$ and $\bar{\ell}_\mathsf{Z_2E}$ are given as follows:
\begin{align*}
\bar{\ell}_\mathsf{ZE}&=\frac{\mu'_\mathsf{VE}(3\bar{\gamma}_\mathsf{E}+2)}{(\mu'_\mathsf{VE}-2)\bar{\rho}_\mathsf{Z}};\\
\bar{\ell}_\mathsf{Z_2E}&=\frac{5\mu'_\mathsf{VE}\bar{\gamma}_\mathsf{E}+4\mu'_\mathsf{VE}-2\phi}{2[(\mu'_\mathsf{VE}-2)\bar{\rho}_\mathsf{Z}+(\mu'_\mathsf{VE}-\phi)\bar{\beta}_\mathsf{Z_0}]}.
\end{align*}
\end{prop}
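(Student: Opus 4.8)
The plan is to evaluate both means by the mass-transport (double-counting) identity that balances, per unit volume, the edge length carried by the typical object against the same quantity reorganised by edges. Writing $m_\mathsf{Z}(e)$ and $m_{\mathsf{Z_2}}(e)$ for the number of cells, respectively cell-facets, adjacent to an edge $e$, and splitting $\mathsf{E}=\mathsf{E[hor]}\cup\mathsf{E[vert]}$, I would start from
\begin{align*}
\lambda_\mathsf{Z}\bar{\ell}_\mathsf{ZE}&=\lambda_{\mathsf{E[hor]}}\mathbb{E}_{\mathsf{E[hor]}}[\ell(e)\,m_\mathsf{Z}(e)]+\lambda_{\mathsf{E[vert]}}\mathbb{E}_{\mathsf{E[vert]}}[\ell(e)\,m_\mathsf{Z}(e)],\\
\lambda_{\mathsf{Z_2}}\bar{\ell}_\mathsf{Z_2E}&=\lambda_{\mathsf{E[hor]}}\mathbb{E}_{\mathsf{E[hor]}}[\ell(e)\,m_{\mathsf{Z_2}}(e)]+\lambda_{\mathsf{E[vert]}}\mathbb{E}_{\mathsf{E[vert]}}[\ell(e)\,m_{\mathsf{Z_2}}(e)],
\end{align*}
so that everything reduces to determining the adjacency multiplicities for the two edge types and then inserting the length-weighted intensities already computed in the proof of Theorem \ref{thm:lemeva}, namely $\lambda_{\mathsf{E[hor]}}\bar{\ell}_{\mathsf{E[hor]}}=\lambda'_\mathsf{E}\bar{\gamma}_\mathsf{E}$ and $\lambda_{\mathsf{E[vert]}}\bar{\ell}_{\mathsf{E[vert]}}=\lambda'_\mathsf{V}$.

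For a horizontal edge $e$ (a translate of a planar edge $e'$, lying over the common vertical face of the two columns adjacent to $e'$ at a cut level of one of them) the multiplicities are constants found by inspecting the three cells meeting $e$: the two cells of that column immediately above and below the cut, and the single cell of the neighbouring column whose vertical facet contains $e$ in its interior. This gives $m_\mathsf{Z}(e)=3$; counting the facets of these cells through $e$ (top/bottom plate and one side facet for each of the first two cells, one interior side facet for the third) gives $m_{\mathsf{Z_2}}(e)=5$. As these are constant, the horizontal contributions are $3\lambda'_\mathsf{E}\bar{\gamma}_\mathsf{E}$ and $5\lambda'_\mathsf{E}\bar{\gamma}_\mathsf{E}$.

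The delicate part is the vertical edges, where the multiplicity depends on the type of the underlying planar vertex $v$ on whose line $\mathcal{L}_v$ the edge lies. Since these weights are constant along $\mathcal{L}_v$ and the vertical edges partition $\mathcal{L}_v$ (length $1$ per unit height), each vertical contribution equals $\lambda'_\mathsf{V}\mathbb{E}'_\mathsf{V}$ of the corresponding multiplicity, the factor $\alpha_v$ dropping out. I plan to show $m_\mathsf{Z}(e)=m'_\mathsf{Z}(v)=m'_\mathsf{E}(v)$ (one cell per planar cell around $v$, whether $v$ is a corner or, for the owner column of a $\pi$-vertex, an interior point of a side facet), yielding the vertical contribution $\lambda'_\mathsf{V}\mu'_\mathsf{VE}$. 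For the facet count the two roles of a vertical edge must be separated: at a cell where $v$ is a corner it lies on the two side facets meeting there, but at the owner cell of a $\pi$-vertex it lies in the \emph{interior} of a single side facet. Hence $m_{\mathsf{Z_2}}(e)=2m'_\mathsf{E}(v)$ when $v\notin\mathsf{V[\pi]}$ and $m_{\mathsf{Z_2}}(e)=2m'_\mathsf{E}(v)-1$ when $v\in\mathsf{V[\pi]}$, so the vertical contribution is $\lambda'_\mathsf{V}(2\mu'_\mathsf{VE}-\phi)$. This case distinction, and correctly charging the $\pi$-vertex owner with one facet rather than two, is the main obstacle.

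Finally I assemble
\begin{align*}
\lambda_\mathsf{Z}\bar{\ell}_\mathsf{ZE}&=3\lambda'_\mathsf{E}\bar{\gamma}_\mathsf{E}+\lambda'_\mathsf{V}\mu'_\mathsf{VE},\\
\lambda_{\mathsf{Z_2}}\bar{\ell}_\mathsf{Z_2E}&=5\lambda'_\mathsf{E}\bar{\gamma}_\mathsf{E}+2\lambda'_\mathsf{V}\mu'_\mathsf{VE}-\phi\lambda'_\mathsf{V},
\end{align*}
and substitute the planar relations $\lambda'_\mathsf{E}=\tfrac12\lambda'_\mathsf{V}\mu'_\mathsf{VE}$, $\lambda'_\mathsf{Z}=\tfrac12\lambda'_\mathsf{V}(\mu'_\mathsf{VE}-2)$, the intensity $\lambda_\mathsf{Z}=\lambda'_\mathsf{Z}\bar{\rho}_\mathsf{Z}$ from Proposition \ref{prop:int}, and $\lambda_{\mathsf{Z_2}}=2\lambda'_\mathsf{Z}\bar{\rho}_\mathsf{Z}+\lambda'_{\mathsf{Z_0}}\bar{\beta}_{\mathsf{Z_0}}$ with $\lambda'_{\mathsf{Z_0}}=\lambda'_\mathsf{V}(\mu'_\mathsf{VE}-\phi)$. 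After cancelling the common factor $\lambda'_\mathsf{V}$ this produces exactly the two claimed expressions. As an independent check I would recompute $\lambda_\mathsf{Z}\bar{\ell}_\mathsf{ZE}$ from the cell side; this forces one to include the horizontal edges lying in the interiors of the vertical facets (arising from the neighbouring columns' cuts), whose total contribution is again $\lambda'_\mathsf{E}\bar{\gamma}_\mathsf{E}$, restoring the factor $3$ and confirming $m_\mathsf{Z}(e)=3$ for horizontal edges.
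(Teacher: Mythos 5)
Your proof is correct, and it reproduces the paper's intermediate totals $\lambda_\mathsf{Z}\bar{\ell}_\mathsf{ZE}=3\lambda'_\mathsf{E}\bar{\gamma}_\mathsf{E}+2\lambda'_\mathsf{E}$ and $\lambda_\mathsf{Z_2}\bar{\ell}_\mathsf{Z_2E}=5\lambda'_\mathsf{E}\bar{\gamma}_\mathsf{E}+2\lambda'_\mathsf{E}+\lambda'_\mathsf{Z_0}$, but by the transposed decomposition: you sum over edges weighted by adjacency multiplicities, whereas the paper sums over the higher-dimensional objects. For $\bar{\ell}_\mathsf{ZE}$ the two computations differ only in the vertical part (the paper writes it as $\lambda'_\mathsf{Z}\mu'_\mathsf{ZV}$ per unit volume, the cell-side version of your length-one-per-unit-height argument on the lines $\mathcal{L}_v$). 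The real divergence is in $\bar{\ell}_\mathsf{Z_2E}$: the paper splits the \emph{facets} into horizontal ones --- horizontal plates with multiplicity two, contributing $2\lambda_\mathsf{P[hor]}\bar{\ell}_\mathsf{P[hor]}=2\lambda'_\mathsf{E}\bar{\gamma}_\mathsf{E}$ --- and vertical ones, which it charges with $\lambda_\mathsf{Z}\bar{\ell}_\mathsf{ZE}+\lambda'_\mathsf{Z_0}$, the summand $\lambda'_\mathsf{Z_0}$ being the correction for vertical edges over planar $0$-faces, which lie on \emph{two} vertical facets of the same cell; this step is asserted without further justification. You instead determine the facet multiplicity of each edge type directly: $m_\mathsf{Z_2}(e)=5$ for horizontal edges, and $2m'_\mathsf{E}(v)$ or $2m'_\mathsf{E}(v)-1$ for vertical edges according to whether the planar vertex $v$ is a $\pi$-vertex, the owner column of a $\pi$-vertex contributing one facet rather than two. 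All your multiplicities are correct, and your vertical-edge total $\lambda'_\mathsf{V}(2\mu'_\mathsf{VE}-\phi)=2\lambda'_\mathsf{E}+\lambda'_\mathsf{Z_0}$ is exactly the vertical-edge portion of the paper's expression together with its correction term, so your case analysis in effect supplies the geometric proof of the step the paper leaves to the reader. What the paper's organisation buys is brevity, since it recycles $\lambda_\mathsf{P[hor]}\bar{\ell}_\mathsf{P[hor]}$ from the proof of Theorem \ref{thm:lemeva} and reuses the first identity inside the second; what yours buys is a self-contained argument in which the $\pi$-vertex geometry --- the source of the $\phi$ in the final formula --- is fully explicit. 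Your final substitutions of the planar relations and of $\lambda_\mathsf{Z}$ and $\lambda_\mathsf{Z_2}$ from Proposition \ref{prop:int} agree with the paper and yield the stated formulas.
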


\begin{proof}
Based on the properties of the column tessellation, it is not difficult to see that
$$\lambda_\mathsf{Z}\bar{\ell}_\mathsf{ZE}=\lambda_\mathsf{Z}\bar{\ell}_\mathsf{ZE[hor]}+\lambda_\mathsf{Z}\bar{\ell}_\mathsf{ZE[vert]}=3\lambda_{\mathsf{E[hor]}}\bar{\ell}_{\mathsf{E[hor]}}+\lambda'_\mathsf{Z}\mu'_\mathsf{ZV}=3\lambda'_\mathsf{E}\bar{\gamma}_\mathsf{E}+2\lambda'_\mathsf{E}$$
and
\begin{align*}
\lambda_\mathsf{Z_2}\bar{\ell}_\mathsf{Z_2E}&=\lambda_\mathsf{Z_2[hor]}\bar{\ell}_\mathsf{Z_2[hor]E}+\lambda_\mathsf{Z_2[vert]}\bar{\ell}_\mathsf{Z_2[vert]E}=2\lambda_\mathsf{P[hor]}\bar{\ell}_\mathsf{P[hor]}+\lambda_\mathsf{Z}\bar{\ell}_\mathsf{ZE}+\lambda'_\mathsf{Z_0}\\
&=2\lambda'_\mathsf{E}\bar{\gamma}_\mathsf{E}+3\lambda'_\mathsf{E}\bar{\gamma}_\mathsf{E}+2\lambda'_\mathsf{E}+\lambda'_\mathsf{Z_0}\\
&=5\lambda'_\mathsf{E}\bar{\gamma}_\mathsf{E}+\lambda'_\mathsf{Z}\frac{4\mu'_\mathsf{VE}-2\phi}{\mu'_\mathsf{VE}-2},
\end{align*}
which completes our proof.
\end{proof}

For the mean values corresponding to the length measure of column tessellations of constant cell-height $1$, we have (using the metric parameter $ \bar{\ell}'_\mathsf{E}$ from the planar tessellation)
\begin{alignat*}{3}
\bar{\ell}_\mathsf{E}=&\frac{1}{2}\bigg(\bar{\ell}'_\mathsf{E}+\frac{1}{\mu'_\mathsf{VE}}\bigg),&
    \quad\bar{\ell}_\mathsf{P}=&\frac{2\mu'_\mathsf{VE}(3\bar{\ell}'_\mathsf{E}+1)}{3\mu'_\mathsf{VE}-2},&
    \quad\bar{\ell}_\mathsf{Z}=&\frac{2(2\mu'_\mathsf{VE}\bar{\ell}'_\mathsf{E}+\mu'_\mathsf{VE}-\phi)}{\mu'_\mathsf{VE}-2},\\
\bar{\ell}_{\mathsf{E[\pi]}}=&\frac{2(\mu'_\mathsf{VE}\bar{\ell}'_\mathsf{E}+\phi)}{\mu'_\mathsf{VE}(2+\mu'_{\mathsf{EV[\pi]}})},&
    \quad\bar{\ell}_\mathsf{Z_1}=&\frac{1}{3}+\frac{2\mu'_\mathsf{VE}\bar{\ell}'_\mathsf{E}}{3(\mu'_\mathsf{VE}-\phi)},&
    \quad\bar{\ell}_\mathsf{P_1}=&\frac{\mu'_\mathsf{VE}(3\bar{\ell}'_\mathsf{E}+1)}{5\mu'_\mathsf{VE}-\phi},\\
\bar{\ell}_\mathsf{Z_2}=&\frac{2(2\mu'_\mathsf{VE}\bar{\ell}'_\mathsf{E}+\mu'_\mathsf{VE}-\phi)}{2\mu'_\mathsf{VE}-\phi-2},&
    \hspace{3mm} \bar{\ell}_\mathsf{ZE}=&\frac{2\mu'_\mathsf{VE}(3\bar{\ell}'_\mathsf{E}+1)}{\mu'_\mathsf{VE}-2},&
    \bar{\ell}_\mathsf{Z_2E}=&\frac{5\mu'_\mathsf{VE}\bar{\ell}'_\mathsf{E}+2\mu'_\mathsf{VE}-\phi}{2\mu'_\mathsf{VE}-\phi-2}.
\end{alignat*}

\subsubsection{Notations and mean values corresponding to the area measure}
An analogue notation is used for mean values corresponding to the area measure \\
\begin{tabular}{lcl}
$\bar{a}_\mathsf{X}$ & -- & the mean total area of all $2$-faces of the typical $\mathsf{X}$-object, where \\&&$\dim$($\mathsf{X}$-object)$\geq 2$.
\end{tabular}\\[2mm]
In this case we have\\[2mm]
\hspace*{5mm}
\begin{tabular}{lcl}
$\bar{a}_\mathsf{P}$ & -- & the mean area of the typical plate, \\
$\bar{a}_\mathsf{Z}$ & -- & the mean surface area of the typical cell and \\
$\bar{a}_{\mathsf{Z_2}}$ & -- & the mean area of the typical cell-facet.
\end{tabular}

To determine mean values corresponding to the area measure of the forms $\bar{a}_\mathsf{X}$, $\bar{a}_\mathsf{X_k}$ and $\bar{a}_\mathsf{X[.]}$ of the column tessellation $\mathcal{Y}$, we use two additional mean values of the planar tessellation $\mathcal{Y}'$, namely $\bar{\gamma}_\mathsf{Z}$ and $\bar{\ell}'_\mathsf{E}$.

\begin{thm}
The mean area of the typical plate of the column tessellation is given as follows:
\item $$\bar{a}_\mathsf{P}=\frac{(\mu'_\mathsf{VE}-2)\bar{\gamma}_\mathsf{Z}+\mu'_\mathsf{VE}\bar{\ell}'_\mathsf{E}}{(\mu'_\mathsf{VE}-2)\bar{\rho}_\mathsf{Z}+\mu'_\mathsf{VE}\bar{\alpha}_\mathsf{E}}.$$
\end{thm}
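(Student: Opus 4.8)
The plan is to split the typical plate into its two geometric kinds and exploit the additivity of the area--weighted intensity. Since a column tessellation has only horizontal and vertical plates, with intensities $\lambda_\mathsf{P[hor]}=\lambda'_\mathsf{Z}\bar{\rho}_\mathsf{Z}$ and $\lambda_\mathsf{P[vert]}=\lambda'_\mathsf{V}\bar{\alpha}_\mathsf{V}$ by Proposition \ref{prop:int}, the starting identity is
$$\lambda_\mathsf{P}\bar{a}_\mathsf{P}=\lambda_\mathsf{P[hor]}\bar{a}_\mathsf{P[hor]}+\lambda_\mathsf{P[vert]}\bar{a}_\mathsf{P[vert]},$$
so it suffices to compute the two area--weighted intensities on the right and then to rewrite the resulting quotient using planar identities.

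First I would treat the horizontal plates. Each horizontal plate is a vertical translate of a planar cell $z$ of $\mathcal{Y}'$, and the plates in the column over $z$ occur with $\rho$--intensity $\rho_z$ in the vertical direction, each carrying area $a'(z)$. Hence the total horizontal plate area per unit volume is
$$\lambda_\mathsf{P[hor]}\bar{a}_\mathsf{P[hor]}=\lambda'_\mathsf{Z}\mathbb{E}'_\mathsf{Z}(a'(z)\rho_z)=\lambda'_\mathsf{Z}\bar{\gamma}_\mathsf{Z},$$
using $\gamma_z=a'(z)\rho_z$.

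Next come the vertical plates, which is the step needing the most care. Every vertical plate lies in the common wall of the two columns based on the cells adjacent to an edge $e$ of $\mathcal{Y}'$. By Property \ref{property2} the horizontal edges in this wall form a point process of intensity $\alpha_e$, and consecutive horizontal edges bound a rectangular vertical plate of width $\ell'(e)$. The essential observation is that, although the number of these plates per unit height equals $\alpha_e$, their heights tile the wall and hence sum to $1$ per unit height; consequently the total vertical plate area per unit height over $e$ equals $\ell'(e)\cdot 1=\ell'(e)$, \emph{independent} of the cut intensity. Summing over edges yields
$$\lambda_\mathsf{P[vert]}\bar{a}_\mathsf{P[vert]}=\lambda'_\mathsf{E}\mathbb{E}'_\mathsf{E}(\ell'(e))=\lambda'_\mathsf{E}\bar{\ell}'_\mathsf{E},$$
which is the area analogue of the vertical--side computation already used in the proof of Theorem \ref{thm:lemeva}.

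Combining the two contributions gives $\lambda_\mathsf{P}\bar{a}_\mathsf{P}=\lambda'_\mathsf{Z}\bar{\gamma}_\mathsf{Z}+\lambda'_\mathsf{E}\bar{\ell}'_\mathsf{E}$, with denominator $\lambda_\mathsf{P}=\lambda'_\mathsf{V}\bar{\alpha}_\mathsf{V}+\lambda'_\mathsf{Z}\bar{\rho}_\mathsf{Z}$ from Proposition \ref{prop:int}(iii). To reach the stated form I would finally insert the planar identities $\lambda'_\mathsf{E}=\tfrac{1}{2}\lambda'_\mathsf{V}\mu'_\mathsf{VE}$ (each edge has exactly two endpoints, so $\mu'_\mathsf{EV}=2$) and $\lambda'_\mathsf{Z}=\tfrac{1}{2}\lambda'_\mathsf{V}(\mu'_\mathsf{VE}-2)$ (Euler's relation $\lambda'_\mathsf{V}-\lambda'_\mathsf{E}+\lambda'_\mathsf{Z}=0$), together with $\lambda'_\mathsf{V}\bar{\alpha}_\mathsf{V}=\lambda'_\mathsf{E}\bar{\alpha}_\mathsf{E}=\tfrac{1}{2}\lambda'_\mathsf{V}\mu'_\mathsf{VE}\bar{\alpha}_\mathsf{E}$. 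A common factor $\tfrac{1}{2}\lambda'_\mathsf{V}$ then cancels between numerator and denominator, producing exactly
$$\bar{a}_\mathsf{P}=\frac{(\mu'_\mathsf{VE}-2)\bar{\gamma}_\mathsf{Z}+\mu'_\mathsf{VE}\bar{\ell}'_\mathsf{E}}{(\mu'_\mathsf{VE}-2)\bar{\rho}_\mathsf{Z}+\mu'_\mathsf{VE}\bar{\alpha}_\mathsf{E}}.$$
The main obstacle is precisely the vertical--plate area: one must recognize that the cut intensity $\alpha_e$ governs only the subdivision of each wall and drops out of the total area, leaving the clean factor $\bar{\ell}'_\mathsf{E}$.
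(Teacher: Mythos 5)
Your proposal is correct and takes essentially the same approach as the paper: the paper's one-line proof is precisely the decomposition $\lambda_\mathsf{P}\bar{a}_\mathsf{P}=\lambda_\mathsf{P[hor]}\bar{a}_\mathsf{P[hor]}+\lambda_\mathsf{P[vert]}\bar{a}_\mathsf{P[vert]}=\lambda'_\mathsf{Z}\bar{\gamma}_\mathsf{Z}+\lambda'_\mathsf{E}\bar{\ell}'_\mathsf{E}$, ``which implies our result.'' The details you supply --- the wall-tiling argument showing the cut intensity $\alpha_e$ drops out of the vertical-plate area, and the final conversion via $\lambda'_\mathsf{E}=\tfrac{1}{2}\lambda'_\mathsf{V}\mu'_\mathsf{VE}$, $\lambda'_\mathsf{Z}=\tfrac{1}{2}\lambda'_\mathsf{V}(\mu'_\mathsf{VE}-2)$ and $\lambda'_\mathsf{V}\bar{\alpha}_\mathsf{V}=\lambda'_\mathsf{E}\bar{\alpha}_\mathsf{E}$ --- are exactly what the paper leaves implicit.
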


\begin{proof}
Because the column tessellation has only horizontal and vertical plates, we have
$$\lambda_\mathsf{P}\bar{a}_\mathsf{P}=\lambda_\mathsf{P[hor]}\bar{a}_\mathsf{P[hor]}+\lambda_\mathsf{P[vert]}\bar{a}_\mathsf{P[vert]}=\lambda'_\mathsf{Z}\bar{\gamma}_\mathsf{Z}+\lambda'_\mathsf{E}\bar{\ell}'_\mathsf{E},$$
which implies our result.
\end{proof}

Using the fact that each plate of a spatial tessellation always belongs to two cells and two facets, we have $\lambda_\mathsf{Z}\bar{a}_\mathsf{Z}=2\lambda_\mathsf{P}\bar{a}_\mathsf{P}$ and $\lambda_\mathsf{Z_2}\bar{a}_\mathsf{Z_2}=2\lambda_\mathsf{P}\bar{a}_\mathsf{P}$. With the help of Proposition \ref{prop:int} we infer that
\begin{align*}
\bar{a}_\mathsf{Z}&=\frac{2}{\bar{\rho}_\mathsf{Z}}\bigg(\bar{\gamma}_\mathsf{Z}+\frac{\mu'_\mathsf{VE}\bar{\ell}'_\mathsf{E}}{\mu'_\mathsf{VE}-2}\bigg),\\
\bar{a}_\mathsf{Z_2}&=\frac{(\mu'_\mathsf{VE}-2)\bar{\gamma}_\mathsf{Z}+\mu'_\mathsf{VE}\bar{\ell}'_\mathsf{E}}{(\mu'_\mathsf{VE}-2)\bar{\rho}_\mathsf{Z}+(\mu'_\mathsf{VE}-\phi)\bar{\beta}_\mathsf{Z_0}}.
\end{align*}

Also for the area measure we can consider mean values of type $\bar{a}_{\mathsf{XY}}$ - the mean total area of all $\mathsf{Y}$-type objects adjacent to the typical $\mathsf{X}$-object.
Again some relations are obvious:
\begin{itemize}
	\item $\bar{a}_\mathsf{ZP}=\bar{a}_\mathsf{Z}$;
	\item $\bar{a}_\mathsf{Z_2P}=\bar{a}_\mathsf{Z_2}.$
\end{itemize}
But an interesting value of this  $\bar{a}_{\mathsf{XY}}$--type is the mean total area of all facets adjacent to the typical cell, namely $\bar{a}_\mathsf{ZZ_2}$. In a facet-to-facet spatial tessellation, it is easy to see that $\bar{a}_\mathsf{ZZ_2}=2\bar{a}_\mathsf{Z}$. Because each cell-facet is a plate and the class $\mathsf{Z_2}$ of cell-facets is equal to the class $\mathsf{P}$ of plates up to the multiplicity $2$. It is difficult to determine $\bar{a}_\mathsf{ZZ_2}$ for an arbitrary non-facet-to-facet spatial tessellation, we only know that $\bar{a}_\mathsf{ZZ_2}\geq \bar{a}_\mathsf{Z}$. But for a column tessellation we can compute $\bar{a}_\mathsf{ZZ_2}$, using the fact that each horizontal facet of a cell is also a facet of one another cell and each vertical facet is an element of $\mathsf{Z_2}$ with multiplicity 1. Therefore we obtain
$$\lambda_\mathsf{Z}\bar{a}_\mathsf{ZZ_2}=\lambda_\mathsf{Z}\bar{a}_\mathsf{Z}+2\lambda_\mathsf{P[hor]}\bar{a}_\mathsf{P[hor]}=\lambda'_\mathsf{Z}\bigg(4\bar{\gamma}_\mathsf{Z}+2\frac{\mu'_\mathsf{VE}\bar{\ell}'_\mathsf{E}}{\mu'_\mathsf{VE}-2}\bigg);$$
hence
$$\bar{a}_\mathsf{ZZ_2}=\frac{2}{\bar{\rho}_\mathsf{Z}}\bigg(2\bar{\gamma}_\mathsf{Z}+\frac{\mu'_\mathsf{VE}\bar{\ell}'_\mathsf{E}}{\mu'_\mathsf{VE}-2}\bigg).$$

\subsubsection{Mean values corresponding to the volume measure}
\begin{thm}
The mean volume of the typical cell of the column tessellation, denoted by $\bar{\upsilon}_\mathsf{Z}$, is given as follows
$$\bar{\upsilon}_\mathsf{Z}=\frac{1}{\lambda'_\mathsf{Z}\bar{\rho}_\mathsf{Z}}.$$
\end{thm}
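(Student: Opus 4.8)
The plan is to reduce the claim to the elementary fact that in any stationary tessellation of $\mathbb{R}^3$ the mean volume of the typical cell is the reciprocal of the cell intensity, and then to substitute the value of $\lambda_\mathsf{Z}$ computed earlier. The statement is therefore a one-line consequence of a general principle together with part (iv) of Proposition \ref{prop:int}.

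First I would recall the volume-fraction identity for stationary tessellations: since the cells of $\mathcal{Y}$ tile $\mathbb{R}^3$ without overlaps, the expected volume they cover per unit volume equals $1$, and by the Campbell--Palm machinery this volume fraction factors as the product of the cell intensity and the mean volume of the typical cell. Hence $\lambda_\mathsf{Z}\,\bar{\upsilon}_\mathsf{Z}=1$; see \cite{SW}. This relation holds for an arbitrary stationary tessellation and in particular for a column tessellation, so no special feature of the column construction is needed at this step.

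Next I would invoke part (iv) of Proposition \ref{prop:int}, which gives $\lambda_\mathsf{Z}=\lambda'_\mathsf{Z}\bar{\rho}_\mathsf{Z}$. Substituting this into $\bar{\upsilon}_\mathsf{Z}=1/\lambda_\mathsf{Z}$ yields at once the asserted formula $\bar{\upsilon}_\mathsf{Z}=1/(\lambda'_\mathsf{Z}\bar{\rho}_\mathsf{Z})$.

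For completeness I would note the alternative, construction-based route and explain why I avoid it. One could decompose the typical cell according to the planar cell $z$ over which its column sits: conditional on that column the plate process has intensity $\rho_z$, so the conditional mean cell height is $1/\rho_z$ and the conditional mean cell volume is $a'(z)/\rho_z$; a Palm average over cells, in which column $z$ contributes spatial cells at rate proportional to $\rho_z$, then cancels the $\rho_z$ factors and reproduces $1/(\lambda'_\mathsf{Z}\bar{\rho}_\mathsf{Z})$. The only delicate point in this direct approach is getting the Palm weighting right, which is precisely why the reciprocal identity $\lambda_\mathsf{Z}\bar{\upsilon}_\mathsf{Z}=1$ is the cleaner path and the one I would present.
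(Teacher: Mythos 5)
Your proof is correct and follows exactly the paper's route: the paper's own proof is the one-line observation that $\lambda_\mathsf{Z}\bar{\upsilon}_\mathsf{Z}=1$, combined implicitly with Proposition \ref{prop:int}(iv) giving $\lambda_\mathsf{Z}=\lambda'_\mathsf{Z}\bar{\rho}_\mathsf{Z}$. You merely spell out both ingredients more explicitly, which is fine.
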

\begin{proof}
It is obvious from the fact that $\lambda_\mathsf{Z}\bar{\upsilon}_\mathsf{Z}=1$.
\end{proof}
The corresponding area and volume mean values of column tessellations of constant cell-height $1$ are
$$\bar{a}_\mathsf{P}=\frac{(\mu'_\mathsf{VE}-2)\bar{a}'_\mathsf{Z}+\mu'_\mathsf{VE}\bar{\ell}'_\mathsf{E}}{3\mu'_\mathsf{VE}-2},\quad\bar{a}_\mathsf{Z}=2\bigg(\bar{a}'_\mathsf{Z}+\frac{\mu'_\mathsf{VE}\bar{\ell}'_\mathsf{E}}{\mu'_\mathsf{VE}-2}\bigg),$$
$$\bar{a}_\mathsf{Z_2}=\frac{(\mu'_\mathsf{VE}-2)\bar{a}'_\mathsf{Z}+\mu'_\mathsf{VE}\bar{\ell}'_\mathsf{E}}{2\mu'_\mathsf{VE}-\phi-2},\quad\bar{a}_\mathsf{ZZ_2}=4\bar{a}'_\mathsf{Z}+\frac{2\mu'_\mathsf{VE}\bar{\ell}'_\mathsf{E}}{\mu'_\mathsf{VE}-2},\quad\bar{\upsilon}_\mathsf{Z}=\frac{1}{\lambda'_\mathsf{Z}}.$$

\end{document}